\def\eps{\varepsilon}
\def\1B{{\bf  1}}
\newcommand{\cR}{\mathbb{R}}
\newcommand\be{\begin{equation}}
\newcommand\ee{\end{equation}}
\newcommand{\benl}{\begin{equation*}}
\newcommand{\eenl}{\end{equation*}}
\newcommand\ba{\begin{array}}
\newcommand\ea{\end{array}}
\newcommand{\bean}{\begin{eqnarray*}}
\newcommand{\eean}{\end{eqnarray*}}
\def\ds{\displaystyle}
\newtheorem{theorem}{Theorem}[section]
\newtheorem{lemma}[theorem]{Lemma}
\newtheorem{proposition}[theorem]{Proposition}
\newtheorem{corollary}[theorem]{Corollary}
\theoremstyle{remark}
{
    \newtheorem{definition}[theorem]{Definition}
    
    \newtheorem{remark}[theorem]{Remark}

}
\DeclareMathAlphabet{\mathpzc}{OT1}{pzc}{m}{it}
\newcommand{\Vspace}{L^2(\mu,\Omega;\cR^n)}
\newcommand{\Uad}{\mathcal{U}_{\rm ad}}
\keywords{optimal control, Hamilton-Jacobi-Bellman equation, Dynamic Programming Principle, Invariance Principles, Riemann-Stieltjes optimal control problems, uncertain dynamics}
\begin{document}

\title[HJB for Control Problems with Uncertainty]{Dynamic Programming Principle and Hamilton-Jacobi-Bellman Equation for Optimal Control Problems with Uncertainty}

\author[M.S. Aronna]{M. Soledad Aronna}
\address{M.S. Aronna\\ Escola de Matem\'atica Aplicada, Funda\c c\~ ao Getulio Vargas, Praia de Botafogo 190,   22250-900 Rio de Janeiro - RJ, Brazil}
\email{soledad.aronna@fgv.br}

\author[M. Palladino]{Michele Palladino}
\address{M. Palladino\\ University of L'Aquila - Department of Information Engineering, Computer Science and Mathematics (DISIM), via Vetoio, 67100,, L'Aquila, Italy}
\email{michele.palladino@univaq.it}


\author[O. Sierra]{Oscar Sierra}
\address{O. Sierra\\ Escola de Matem\'atica Aplicada, Funda\c c\~ ao Getulio Vargas, Praia de Botafogo 190,   22250-900 Rio de Janeiro - RJ, Brazil}
\email{oscar.fonseca@fgv.br}

\begin{abstract} 
We study the properties of the value function associated with an optimal control problem with uncertainties, known as {\em average} or {\em Riemann-Stieltjes} problem. Uncertainties are assumed to belong to a compact metric probability space, and appear in the dynamics, in the terminal cost and in the initial condition, which yield an infinite-dimensional formulation.
By stating the problem as an evolution equation in a Hilbert space, we show that the value function is the unique lower semi-continuous proximal solution of the Hamilton-Jacobi-Bellman (HJB) equation. Our approach relies on invariance properties and the dynamic programming principle.
\end{abstract}

\maketitle

\footnotetext[1]{The first and third authors acknowledge the support of FAPERJ (Rio de Janeiro, Brazil) via processes SEI-260003/004623/2021 and SEI-260003/000175/2024; and CNPq (Brazil) through process code 312407/2023-8. }


\section{Introduction}\label{Introduction}
This work studies the properties of the value function and the Hamilton-Jacobi-Bellman equation related to a class of optimal control problems that considers uncertainty in the dynamics and initial state. From a practical point of view, the framework covers models with stochasticity in the parameter values and in the  initial conditions. The uncertain values are assumed to belong to a probability space, and the cost involves the average of a terminal cost computed among the corresponding probability measure. This framework is known in the literature as {\em average optimal control} \cite{zuazua2014} or  {\em Riemman-Stieltjes} problems \cite{Ross2015,Khalil2017Thesis,BettiolKhalil}.

The main contribution of the work is providing a theoretical framework by stating the problem in a (possibly infinite dimensional) Hilbert space for the time dependent state variable and proving that the associated value function is the unique lower semi-continuous solution of a Hamilton-Jacobi-Bellman (HJB) equation, under mild regularity assumptions on the governing dynamics (it is required to be merely measurable in time). More specifically, we study the following parametrized Riemann–Stieltjes problem {with respect to} the initial time $s\in[0,T]$  and the initial state,  the latter being an $L^2$-function of the parameter $\omega,$ this is, the initial condition is represented by elements $\varphi$ in $ \Vspace$:
\begin{equation*} 
(P)_{s,\varphi} \parbox{.9\textwidth}{
\begin{align}
&\min_{u} \int_\Omega g(x(T,\omega),\omega) \, d\mu(\omega), \nonumber \\
&\text{s.t.} \nonumber \\
&\begin{cases}\label{dynamics}
\dot{x}(t,\omega) = f(t,x(t,\omega),u(t),\omega), & \text{a.e. } t\in [s,T],  \ \omega\in \Omega, \\
x(s,\omega) = \varphi(\omega), &  \omega\in \Omega, \\
u(t) \in U(t), & \text{a.e. } t\in [s,T].
\end{cases}
\end{align}
}\nonumber
\end{equation*}
Here $u:[s,T]\rightarrow \cR^m$ is the control function, which belongs to the {\em admissible controls set}
\benl
\mathcal{U}_{\rm ad}[s,T] := \left\{ u \in  L^\infty(s,T;\cR^m): u(t)\in U(t) \quad\text{a.e. } t\in [s,T]\right\},
\eenl 
and the triple  $(\Omega,d_{\Omega},\mu)$ is a metric measure space.

This type of problem has been addressed using various approaches, including different theoretical frameworks and computational techniques.  In \cite{RossKarpenko2015}, the authors propose models from aerospace engineering in the Lebesgue-Stieltjes framework, and in \cite{Ross2015}, they formally derive a Pontryagin Maximum Principle for that class of average problems. In  \cite{Phelps2016}, the authors  provide a computational framework for this uncertain dynamic optimization problems, providing consistent approximation techniques. 
The authors in \cite{PesarePalladinoFalcone2021} deal with the linear quadratic case, showing convergence results in model-based reinforcement learning scenarios. In \cite{Scagliotti2023}, this class of problems is referred to as {\em ensemble optimal control}. Latter work establishes convergence results and numerical algorithms for their solution. It is worth highlighting , that our framework involves an average cost, while other authors have studied the worst-case scenario formulation (see {\em e.g.}  \cite{Vinter2005} for {\em minimax optimal control}).

The problem $(P)_{s,\varphi}$ was also studied in \cite{BettiolKhalil} when the initial condition is constant-valued, {\em i.e.} $\varphi\equiv x_0 \in \mathbb{R}^n$, and with an additional averaged right end-point constraint. The authors state necessary conditions for optimality by considering, for each control $u$, the set of pointwise trajectories $\{x(\cdot,\omega)\in W^{1,1}([0,T]:\mathbb{R}^n): \omega \in \Omega\}$  
 satisfying the constraints. Here we extend the set of constraints to an ODE defined in a Hilbert space, that is, given an initial condition $\varphi\in \Vspace$ and a control variable $u\in \Uad[s,T]$, the corresponding feasible trajectory $x(\cdot,\cdot)$ belongs to $ C([s,T]: \Vspace)$ and satisfies  \eqref{dynamics}. 

The {\em value function} for the Riemann-Stieltjes problem $(P)_{s,\varphi}$ is the extended-valued mapping given by
\benl
\begin{aligned}
V \colon [0,T]\times \Vspace & \rightarrow \mathbb{R}\cup\{\pm \infty\}\\
(s,\varphi) & \mapsto V(s,\varphi) := \inf_{u\in \Uad[s,T]} \,(P)_{s,\varphi}.
\end{aligned}
\eenl
When $V$ is Fréchet differentiable with respect to $(t,\varphi)$ in $ (0,T)\times \Vspace,$
it satisfies the HJB equation
\be\label{HJB}
 \begin{cases}
    -\left[ V_{t}(t,\varphi)+H(t,\varphi,V_{\varphi}(t,\varphi))\right]=0, \\ \\
    V(T,\varphi)=\displaystyle \int_{\Omega} g\left(\varphi(\omega), \omega\right) d \mu(\omega),
\end{cases}   
\ee
where the Hamiltonian $H:[0,T]\times \Vspace \times \Vspace^* \rightarrow \mathbb{R}$ is given by 
\be\label{Hamiltonian}
    H(t,\varphi, p):=\inf_{u\in {U(t)}} \left\langle p(\cdot), f(t, \varphi(\cdot), u, \cdot) \right\rangle,
\ee
and $(V_t,V_{\varphi})\in (\cR \times \Vspace)^*$ denotes the  Fréchet derivative  of $V$ with respect to $(t,\varphi)\in (0,T)\times \Vspace$ 
Here, we deal with nonsmooth solutions of the HJB equation, {\em i.e.} we consider \eqref{HJB} for $V$ merely lower semi-continuous (l.s.c.)  and we adopt the notions of {\em proximal subdifferential} (denoted by $\partial_PV$) and {\em proximal solutions} \cite{ClarkeLedyaev1994}.  To prove that $V$ is the unique l.s.c. proximal solution of \eqref{HJB}, we rely on results of {\em invariance} from \cite{Donchev}. Latter work consists of an extension to the infinite-dimensional case of the work of Clarke \textit{et al.} \cite{Clarke1995QualitativePO}, where the HJB equation is related to the concepts of weak and strong invariance (often called {\em viability}  and invariance, respectively \cite{AubCelBook}).

 Problems where the state space is finite-dimensional and the cost function is merely lower semi-continuous have been extensively studied.  For instance, Frankowska in \cite{Frankowska93} addresses the case when the dynamics is continuous in time; later in \cite{FRANKOWSKA1995}, the result is extended to measurable in time dynamics. 
In both cases, the approach is to prove the invariance properties of an associated autonomous system along with the well-known {\em Dynamic Programming Principle.} In \cite{FRANKOWSKA1995}, the authors show that the nonsmooth solutions of the HJB equation belong to the subdifferential of the value function and are characterized by means of the {\em contingent cone} \cite{AubCelBook},\cite{AubFraBook} to the epigraph of the value function. Such a characterization fails when the space is infinite dimensional (see \cite[Proposition 6.4.8]{AubFraBook}). In our framework, we use analogous objects from the proximal analysis known as  {\em proximal subdifferentials} (see Section \ref{DP.and.IP}), which are characterized by the elements of the {\em normal proximal cone} to the epigraph of the value function; these gradients will be involved in the concept of {\em proximal solution} of the HJB equation.
The results of existence and uniqueness of the HJB equation provided in \cite{Frankowska93,FRANKOWSKA1995}   rely on the existence of a minimum of the cost function, which requires the compactness of the set of trajectories. In general, this property does not hold for infinite-dimensional spaces. However, in the semilinear case, when defined on separable Banach spaces, the lack of compactness of the set of trajectories can be overcome by  a compactness assumption on the semigroup related to the differential operator and application of the  Arzelà-Ascoli Theorem \cite{FRANKOWSKA1990, Fattorini1999,liYong}.

In the present work, the cost is given by an integral functional in which the lower semi-continuity is inherited by the function $g$,
 and the dynamics are purely nonlinear. The compactness  of the set of trajectories is addressed by introducing an assumption on the measure $\mu$ and ensuring some regularity properties for the mapping $\omega \mapsto \int_{s}^{t} f(\sigma,\varphi(\omega),u(\sigma),\omega)d\sigma$. Additionally, we employ a Rellich-Kondrakov-type theorem \cite{Hajlasz2000},\cite{Agnieszka}. 
 
The paper is outlined as follows. Section \ref{sec.DPP}  includes properties on the behavior of the trajectories and the statement and proof of the Dynamic Programming Principle. Section \ref{sec.op.traj} is dedicated to the study of the compactness properties of the set of trajectories, characterizing the lower semi-continuity of the value function, and showing the existence of minimizers. Section \ref{DP.and.IP} is devoted to proving that the value function is the unique proximal solution of the HJB equation.  Finally, the paper ends with an appendix, presenting some technical results needed in our proofs. 

\subsection{Notations, basic assumptions and definitions}\label{notation}

Throughout the paper, we impose the following set of hypotheses, which we refer to hereafter with {\bf (H)}:
\begin{itemize}
\item[(i)]  $(\Omega,d_{\Omega},\mu)$ is a compact metric measure space,   $\mu$ being a finite Radon measure. 
\item[(ii)] $U:[0,T] \leadsto {\bf U}$ is an upper semi-continuous set-valued function taking nonempty compact values, included in a compact set $ {\bf U} \subset \cR^m$. 
\item[(iii)] $f\colon [0,T] \times \cR^n \times \cR^m \times \Omega \to \cR^n$ is measurable w.r.t. $t$ and continuous on the other variables, and there exist constants $c,k>0$ such that, for all  $x\in \cR^n,u\in \cR^m,$ $\omega\in \Omega,$ a.e. $t\in (0,T),$ $f$ satisfies
\be
\label{fbounded}
|f(t,x,u,\omega)| \leq c (1+|x|),
\ee
and
\be
\label{fLipschitz}
|f(t,x,u,\omega)-f(t,x',u,\omega)|\leq k|x-x'|.
\ee
\item[(iv)] The map $g:\mathbb{R}^n \times \Omega \rightarrow \mathbb{R}$ is such that
 $$ 
 g(x, \omega) \geq a(\omega)-b|x|^2, \ \text{for every} \ \  (x,\omega) \in \mathbb{R}^n\times \Omega,$$ 
 for some $a \in L^1(\mu,\Omega):=
 \{a:\Omega\rightarrow \cR : \int_{\Omega}|a(\omega)|\,d\mu(\omega)<\infty\},
$ and $ b \geq 0.$
Additionally, $g$ satisfies the following two conditions:

\begin{itemize}
\item[$\bullet$] $g$ is a $\mathcal{L}\otimes\mu$ -measurable function, $\mathcal{L}$ here denoting the Lebesgue measure
\vspace{0.15cm}
\item[$\bullet$] $g(\cdot,\omega)$ is lower semi-continuous for every fixed $\omega \in \Omega,$
\end{itemize}
\end{itemize}
The conditions on $g$ in item (iv) above are equivalent to stating that $g$ is a  {\em normal integrand} 
\cite[p. 195]{castaing1975convex}.


The values of our state variables will be taken in the space
$$
\Vspace:=\Big\{\varphi:\Omega\rightarrow \cR^n : \int_{\Omega}|\varphi(\omega)|^2\,d\mu(\omega)<\infty\Big\},
$$
which is a Hilbert space endowed with the scalar product
$$
\langle \varphi,\psi\rangle:= \int_{\Omega}\varphi(\omega)\cdot \psi(\omega)\,d\mu(\omega) \quad \text{for} \quad \varphi,\psi \in \Vspace.
$$
Throughout this paper, we will frequently use the notation $
\|\cdot \|_{\Vspace}$ and $\|\cdot \|_{L^2}
$
to refer to the norm associated with the scalar product above.

If $X$ is a vector space we will  $\mathcal{B}_{r}(x)$ denote the open ball centered at $x$ with radius $r>0.$
We call {\em process} a pair $(x, u)$  composed of an absolutely continuous function $x \in W^{1,1}([s,T]:\Vspace) $ and a measurable function $u:[s,T]\rightarrow \cR^m$, which together satisfy the constraints \eqref{dynamics} of problem $(P)_{s,\varphi}.$ Additionally, $x$ is referred as {\em trajectory} associated to the {\em control} $u$. 
If there exists a  process $(\bar{x},\bar{u})$ solving the problem $(P)_{s,\varphi},$ then it is called   {\em optimal pair}, and we will refer to $\bar{x}$ and $\bar{u}$ as an {\em optimal trajectory} and {\em control}, respectively. A trajectory $x(t,\cdot)$ for the problem $(P)_{s,\varphi}$ can be written  in the form of the following evolution integral equation
\be\label{trajectory}
x(t,\cdot)=\varphi(\cdot)+\int_{s}^{t} f(\sigma,x(\sigma,\cdot),u(\sigma);\cdot) \,d\sigma.
\ee
It is clear (see {\em e.g.} \cite[Ch.6, Theorem 1.2.]{Pazy}) that under ${\bf (H)},$ for any $(\varphi, u) \in  \Vspace \times \Uad[s,T],$ there exists a unique solution $x(\cdot,\cdot)\in C([s,T]: \Vspace)$ of \eqref{trajectory}.

The {\em reduced cost} of $(P)_{s,\varphi}$ will be defined as
$$
J_{[s,T]}(\varphi,u) := \int_\Omega g(x_{\varphi,u}(T,\omega),\omega)d\mu(\omega),
$$ 
where $x_{\varphi,u}$ is the trajectory corresponding to the initial condition $x(s,\cdot)=\varphi(\cdot)$, and the choice of control $u$. With these considerations, the value function can be written as
\be\label{vf}
V(s,\varphi):=\inf_{u\in \Uad[s,T]} J_{[s,T]}(\varphi,u) =  \inf_{u\in \Uad[s,T]} \int_\Omega g(x_{\varphi,u}(T,\omega),\omega)d\mu(\omega).
\ee

\section{Dynamic Programming Principle}\label{sec.DPP}

In this section, the Dynamic Programming Principle for problem family  $(P)_{s,\varphi}$ is stated. 
This property is the key tool needed to obtain the results of Section \ref{DP.and.IP}, it establishes that the value function is nondecreasing over trajectories of \eqref{dynamics} and constant over optimal trajectories. In other words, optimal trajectories are the characteristic curves for the HJB equation.

 The following result embraces the properties of the trajectories of \eqref{dynamics}.  Here we will write $x_{s,\varphi}$  for the solution of \eqref{dynamics} with initial time $s\in [0, T]$ and datum $\varphi \in \Vspace$. In what follows, $C(T)$  is a positive constant depending on $T$, which can vary depending on the context. 
\begin{lemma}\label{x.ineq}
     Let assumption {\bf (H)} hold. Then, for any $s$ and $\tau$ with $0 \leq s \leq \tau \leq T,$  $\varphi, \bar{\varphi} \in \Vspace$, and any control $u \in \Uad[s,T]$, the following inequalities hold
   
    $$\begin{aligned}
 &\textit{ 1.}) \
\|x_{s, \varphi}(t)\|_{L^ 2} \leq C(T)e^{c(t-s)}\left(\mu(\Omega)^{1/2}+\|\varphi\|_{L^2} \right), \quad t \in[s, T] . \\ 
 &\textit{ 2.}) \ 
\|x_{s, \varphi}(t)-x_{s, \bar{\varphi}}(t)\|_{L^2}  \leq e^{k(t-s)}\|\varphi-\bar{\varphi}\|_{L^2} , \quad t \in[s, T] .
\\ 
 &\textit{ 3.}) \ \|x_{\tau, \varphi}(t)-x_{s, \varphi}(t)\|_{L^2}  \leq \\
 &\hspace{3cm}C(T)e^{k(t-\tau)} \big(\mu(\Omega)^{1/2}+\|\varphi\|_{L^2} \big)(\tau-s), \quad  t \in[\tau, T].
\\ 
&\textit{4.}) \
\|x_{s, \varphi}(t)- x_{s, \varphi}(\tau)\|_{L^2}  \leq C(T)e^{c(t-s)} \big(\mu(\Omega)^{1/2}+\|\varphi\|_{L^2} \big)(t-\tau), \ t \in[s, T] .
 \end{aligned}
 $$
\end{lemma}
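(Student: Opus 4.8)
The plan is to derive all four bounds from the integral representation \eqref{trajectory} together with Grönwall's inequality, exploiting the fact that, for a.e.\ fixed $\omega\in\Omega$, the scalar curve $t\mapsto x_{s,\varphi}(t,\omega)$ satisfies the pointwise identity $x_{s,\varphi}(t,\omega)=\varphi(\omega)+\int_s^t f(\sigma,x_{s,\varphi}(\sigma,\omega),u(\sigma),\omega)\,d\sigma$, valid for a.e.\ $\omega$ after choosing representatives, since \eqref{trajectory} holds as an identity in $\Vspace$. The key idea is to run Grönwall pointwise in $\omega$ first and only then integrate over $\Omega$: this is what produces the sharp exponential rate $e^{c(t-s)}$ in estimate 1 (a Grönwall argument carried out directly with the $\Vspace$-norm would inflate the rate to $\sqrt2\,c$, because of the quadratic passage through the growth bound \eqref{fbounded}). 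Estimates 1, 2 and 4 are then essentially independent, while estimate 3 is obtained by combining 2 and 4 through the flow structure of the dynamics.

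For estimate 1, I would use \eqref{fbounded} pointwise to get $|x_{s,\varphi}(t,\omega)|\le|\varphi(\omega)|+c(t-s)+c\int_s^t|x_{s,\varphi}(\sigma,\omega)|\,d\sigma$, whence the scalar Grönwall lemma yields $|x_{s,\varphi}(t,\omega)|\le\big(|\varphi(\omega)|+c(t-s)\big)e^{c(t-s)}$ for a.e.\ $\omega$. Taking the $L^2(\mu)$-norm, using Minkowski's inequality and $(t-s)\le T$, gives exactly 1 with $C(T)=\max\{1,cT\}$. Estimate 2 is entirely analogous: subtracting the integral equations for $x_{s,\varphi}$ and $x_{s,\bar\varphi}$ and using the Lipschitz bound \eqref{fLipschitz} pointwise gives $|x_{s,\varphi}(t,\omega)-x_{s,\bar\varphi}(t,\omega)|\le|\varphi(\omega)-\bar\varphi(\omega)|+k\int_s^t|x_{s,\varphi}(\sigma,\omega)-x_{s,\bar\varphi}(\sigma,\omega)|\,d\sigma$, so Grönwall produces the pointwise bound $|\varphi(\omega)-\bar\varphi(\omega)|\,e^{k(t-s)}$, and integrating its square over $\Omega$ yields 2 with no extra constant.

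For estimate 4, starting from $x_{s,\varphi}(t,\cdot)-x_{s,\varphi}(\tau,\cdot)=\int_\tau^t f(\sigma,x_{s,\varphi}(\sigma,\cdot),u(\sigma),\cdot)\,d\sigma$, I would estimate in $\Vspace$ by the triangle inequality for the Bochner integral, obtaining $\|x_{s,\varphi}(t)-x_{s,\varphi}(\tau)\|_{L^2}\le\int_\tau^t\|f(\sigma,x_{s,\varphi}(\sigma),u(\sigma))\|_{L^2}\,d\sigma$. The growth condition \eqref{fbounded} gives $\|f(\sigma,x_{s,\varphi}(\sigma),u(\sigma))\|_{L^2}\le\sqrt2\,c\big(\mu(\Omega)^{1/2}+\|x_{s,\varphi}(\sigma)\|_{L^2}\big)$, and inserting estimate 1 to control $\|x_{s,\varphi}(\sigma)\|_{L^2}$ together with $e^{c(\sigma-s)}\le e^{c(t-s)}$ on $[\tau,t]$ yields 4 after integrating the resulting constant factor over an interval of length $t-\tau$.

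Finally, estimate 3 follows from the observation that on $[\tau,T]$ both $x_{\tau,\varphi}$ and $x_{s,\varphi}$ solve the same controlled equation for the same control $u$, the first with datum $\varphi$ at time $\tau$ and the second with datum $x_{s,\varphi}(\tau)$ at time $\tau$. Applying estimate 2 with initial time $\tau$ to this pair gives $\|x_{\tau,\varphi}(t)-x_{s,\varphi}(t)\|_{L^2}\le e^{k(t-\tau)}\|\varphi-x_{s,\varphi}(\tau)\|_{L^2}$; since $\varphi=x_{s,\varphi}(s)$, the remaining factor $\|x_{s,\varphi}(s)-x_{s,\varphi}(\tau)\|_{L^2}$ is controlled by estimate 4 applied to the time pair $s,\tau$, absorbing $e^{c(\tau-s)}\le e^{cT}$ into $C(T)$. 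The only genuine difficulty I anticipate is bookkeeping rather than conceptual: justifying the pointwise-in-$\omega$ reading of \eqref{trajectory}, and tracking the constants $C(T)$ carefully so that the \emph{stated} exponential rates $c$ and $k$ appear, instead of the inflated rates that a careless $\Vspace$-level Grönwall would give.
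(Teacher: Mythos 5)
Your argument is correct and is precisely the standard Gr\"onwall argument that the paper's one-line proof invokes (via the cited reference): the pointwise-in-$\omega$ reduction of \eqref{trajectory}, the derivations of estimates 1, 2 and 4, and the concatenation of 2 and 4 through the flow property to obtain 3 all check out. One cosmetic remark: the $\sqrt{2}$ inflation you attribute to a norm-level Gr\"onwall is avoidable, since Minkowski's inequality gives $\big\|c(1+|x|)\big\|_{L^2}\le c\big(\mu(\Omega)^{1/2}+\|x\|_{L^2}\big)$ directly, so arguing at the level of the $\Vspace$-norm also yields the stated rate $c$.
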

\begin{proof}
It follows directly from assumptions \eqref{fbounded}, \eqref{fLipschitz}, and a standard application of Grönwall's Lemma (see for instance \cite[Ch.6, Lemma 2.1.]{liYong}). 
\end{proof}

The following result holds. 
\begin{theorem}[Dynamic Programming Principle]
\label{DPP} Under the  hypothesis {\bf (H)}, the following conditions hold true:
\begin{itemize}
\item[i)] for every $\varphi \in \Vspace$, for every and $s_{1},s_{2}\in [0,T]$, with $s_{1}\leq s_{2}$, one has
\begin{equation}\label{DPP_1}
V(s_{1},\varphi) \leq V(s_{2},x_{\varphi,u}(s_{2};\,\cdot)),
\end{equation}
for any measurable control $u\in \Uad[s_1,s_2],$ where $x_{\varphi,u}$ is the trajectory with initial condition $x_{\varphi,u}(s_1,\cdot)=\varphi(\cdot)$ and associated to control $u\in \Uad[s_1,s_2];$
\item[ii)] for every $\varphi \in \Vspace$  and $s_{1},s_{2}\in [0,T]$, with $s_{1}\leq s_{2}$, one has
\begin{equation}\label{DPP_2}
V(s_{1},\varphi) = \inf_{u\in \Uad[s_1,T]} V(s_{2},x_{\varphi,u}(s_2,\cdot)).
\end{equation}
\end{itemize}
\end{theorem}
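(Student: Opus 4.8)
The plan is to establish both assertions by the classical dynamic-programming argument, exploiting two structural features of $(P)_{s,\varphi}$: the cost is of Mayer type (it depends only on the terminal state $x(T,\cdot)$), and trajectories of \eqref{trajectory} enjoy a semigroup (flow) property inherited from the uniqueness of solutions guaranteed under {\bf (H)}. The two elementary facts I would isolate first are: (a) if $u\in\Uad[s_1,s_2]$ and $v\in\Uad[s_2,T]$, then their concatenation
$$
w(t):=\begin{cases} u(t), & t\in[s_1,s_2],\\ v(t), & t\in(s_2,T],\end{cases}
$$
belongs to $\Uad[s_1,T]$, since measurability and the pointwise constraint $w(t)\in U(t)$ are preserved; and (b) writing $\psi:=x_{\varphi,u}(s_2,\cdot)$, which lies in $\Vspace$ by part 1 of Lemma \ref{x.ineq}, the trajectory $x_{\varphi,w}$ issued from $\varphi$ at time $s_1$ coincides with $x_{\varphi,u}$ on $[s_1,s_2]$ and with $x_{\psi,v}$ (issued from $\psi$ at time $s_2$) on $[s_2,T]$. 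Fact (b) follows by splitting the integral equation \eqref{trajectory} at $s_2$ and invoking uniqueness. In particular $x_{\varphi,w}(T,\cdot)=x_{\psi,v}(T,\cdot)$, so the Mayer structure yields the exact identity $J_{[s_1,T]}(\varphi,w)=J_{[s_2,T]}(\psi,v)$.

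For part i), I would fix $u\in\Uad[s_1,s_2]$ and set $\psi:=x_{\varphi,u}(s_2,\cdot)$. For an arbitrary $v\in\Uad[s_2,T]$, the concatenation $w$ is admissible on $[s_1,T]$, hence by definition of the value function and the identity above,
$$
V(s_1,\varphi)\le J_{[s_1,T]}(\varphi,w)=J_{[s_2,T]}(\psi,v).
$$
Taking the infimum over $v\in\Uad[s_2,T]$ gives $V(s_1,\varphi)\le V(s_2,\psi)=V(s_2,x_{\varphi,u}(s_2,\cdot))$, which is exactly \eqref{DPP_1}.

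For part ii), the inequality ``$\le$'' is obtained by taking the infimum of \eqref{DPP_1} over all admissible $u$; note that $x_{\varphi,u}(s_2,\cdot)$ depends only on $u|_{[s_1,s_2]}$, so infimizing over $\Uad[s_1,s_2]$ and over $\Uad[s_1,T]$ produce the same right-hand side. For the reverse inequality ``$\ge$'', I would take an arbitrary $w\in\Uad[s_1,T]$, restrict it to $u:=w|_{[s_1,s_2]}$ and $v:=w|_{[s_2,T]}$, set $\psi:=x_{\varphi,w}(s_2,\cdot)$, and again invoke $J_{[s_1,T]}(\varphi,w)=J_{[s_2,T]}(\psi,v)$. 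Then
$$
J_{[s_1,T]}(\varphi,w)=J_{[s_2,T]}(\psi,v)\ge V(s_2,\psi)\ge \inf_{u'\in\Uad[s_1,T]}V(s_2,x_{\varphi,u'}(s_2,\cdot)),
$$
and taking the infimum over $w\in\Uad[s_1,T]$ on the left yields the desired bound. Combining the two inequalities gives \eqref{DPP_2}.

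I do not expect a deep obstacle here: the only point requiring genuine care is the rigorous justification of the flow property (b) in the infinite-dimensional setting, that is, that concatenating controls corresponds to concatenating the associated solutions of \eqref{trajectory}. This rests on the existence-uniqueness statement cited after \eqref{trajectory} together with the a priori bound of Lemma \ref{x.ineq}(1), which guarantees $\psi\in\Vspace$ so that $V(s_2,\psi)$ and $x_{\psi,v}$ are well defined. Everything else is bookkeeping enabled by the absence of a running cost, which makes the two reduced costs coincide \emph{exactly} rather than merely up to an integral remainder term.
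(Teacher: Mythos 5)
Your proof is correct and follows essentially the same route as the paper's: both arguments rest on concatenation of admissible controls, the flow/uniqueness property of \eqref{trajectory}, and the resulting exact identity $J_{[s_1,T]}(\varphi,w)=J_{[s_2,T]}(\psi,v)$ coming from the Mayer structure. The only differences are cosmetic (you argue part i) directly where the paper argues by contradiction, and in part ii) you take an infimum over all controls where the paper picks an $\varepsilon$-optimal one).
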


\begin{proof} See Appendix \ref{ProofDPP}.
\end{proof}


\section{Existence of Optimal controls}\label{sec.op.traj}

In this section, it is proved that the problem $(P)_{s,\varphi}$ admits at least one solution. The approach involves using differential inclusions to show the compactness of the set of trajectories which, together with the lower semi-continuity of the functional ${\mathcal J}:\Vspace \rightarrow \cR,$ given by
\begin{equation}
\label{calJ}
{\mathcal J}(\varphi):=\int_{\Omega}g(\varphi(\omega),\omega)d\mu(\omega),
\end{equation} 
guarantees the result. At this point, it is important to mention that, in view of Proposition \ref{multifunctionU} in the Appendix, assumption {\bf (H)}(ii) on the regularity properties of the set-valued mapping $U$ ensures its pseudo-continuity ({\em Definition} \ref{pseudocontinuous}). We make use of this property of $U$ in the proof of closedness of the set of trajectories.

Additionally, we consider the set-valued map 
\begin{equation}
\label{Fdef}
\begin{aligned}
F: [0, T] \times L^2(\mu,\Omega;\mathbb{R}^n) &\leadsto L^2(\mu,\Omega;\mathbb{R}^n) \\
(t,\varphi) &\mapsto F(t,\varphi(\cdot)):=f(t, \varphi(\cdot), U(t),\cdot )
\end{aligned}
\end{equation}
and the associated differential inclusion
\be\label{inclu}
    \dot{x}(t,\cdot) \in F(t,x(t,\cdot))  \ \ \ \text{a.e.}
    \ t \in [s,T].
\ee
Fillipov's Lemma (see Corollary \ref{selection} in the Appendix) allows us to reduce the differential equation 
\be\label{dif.eq}
 \dot{x}(t,\cdot)= f(t, x(t,\cdot), u(t),\cdot), \quad u(t) \in U(t)  \ \ \ \text{a.e.}
    \ t \in [s,T]
\ee
to the differential inclusion \eqref{inclu}. More precisely, Fillipov's Lemma states that \eqref{inclu} and \eqref{dif.eq} have the same set of trajectories.  We will denote the set of trajectories of \eqref{inclu} with initial data $\varphi\in\Vspace$ at time $s\in[0,T]$  as 
\benl
\begin{aligned}
    S_{[s,T]}(\varphi):= \left\{ x \in C([s,T]: L^2(\mu,\Omega;\mathbb{R}^n)) : \  x  \ \text{solves  \eqref{inclu},} \ x(s,\cdot )=\varphi(\cdot) \right\}.
    \end{aligned}
\eenl

\subsection{Compactness of the set of trajectories}
We are interested in the compactness properties of the trajectories of \eqref{inclu} with fixed initial time and datum. {For purely nonlinear dynamics defined in a Banach space, this property can be obtained by assuming an inequality that involves a measure of non-compactness \cite{Tolst}, a situation not treated here. On the other hand,} for semilinear evolutionary distributed parameter systems in a separable Banach space $X$, the compactness of such a set is achieved through compactness assumptions either on 
a ${C_0}$-semigroup $S$ or on the nonlinear part. In \cite[page 110]{liYong}, this result is obtained assuming that the semigroup $S$ is compact, and in \cite{FRANKOWSKA1990,Frankowska1992}, it is achieved when at least one of the following two conditions is satisfied: compactness of $S$ or inclusion of the nonlinear part in a compact set.
When $S$ is compact or the nonlinear part  maps into a compact set, the first step to obtain  the compactness of the set of trajectories is proving the  same property for the operator 
$$
\mathcal{S}:L^{p}([s,T]:X)\rightarrow C([s,T]:X), \quad p>1,
$$ 
given by 
$$\mathcal{S}(f(\cdot)):=\int_{s}^{\cdot}S(\cdot -\sigma)f(\sigma)d\sigma,$$
which corresponds to the integral component of the mild solution of the associated semilinear system \cite{liYong}. The compactness of $\mathcal{S}$ yields the relative compactness of the set of trajectories. Then, the application of Mazur's Lemma to the differential inclusion approach establishes the closedness of $S_{[s,T]}(\varphi)$. Our method here consists of tailoring and extending the described approaches to our framework. Thus, for the nonlinear equation \eqref{dif.eq}, we will introduce additional conditions on the measure and dynamics to establish the compactness of the analogous operator $$\mathcal{I}:C([s,T]: \Vspace)\times \Uad[s,T] \rightarrow C([s,T]: \Vspace)$$ defined by
\be\label{OP.INT}
\mathcal{I}(x,u):=\int_{s}^{\cdot} f(\sigma,x(\sigma,\cdot),u(\sigma),\cdot)d\sigma, \ \text{where} \ (x,u) \ \text {is a process.} 
\ee
  Thus, if $\{(x_k,u_k)\}_k$ is a bounded sequence of control processes, the corresponding sequence $\{\mathcal{I}(x_k,u_k)\}_k$ contains a subsequence converging to some function $F^*$ in $ C([s,T]: \Vspace),$ then, the Mazur's Lemma and Fillipov's Lemma will ensure that $F^*$ is the integral part of a trajectory. Following Fattorini \cite{Fattorini1999}, that property means that the control system is {\em trajectory complete}.

We consider the following hypothesis ${\bf (H_{\mu})}$ consisting of the following two conditions:
\be\label{Hyp.mu}
\text{for any}\ \ r>0, \ h(r)=\inf \{\mu(\mathcal{B}_{r}(\omega)): \omega \in \Omega\}>0.
\ee 

There exists a modulus of continuity $\theta_{f}(\cdot)$ such that, for all $\omega_{1},\omega_{2} \in \Omega,$
\be\label{fwregular}
\int_{0}^{T}\sup_{u\in U(t), \ x,y\in\cR^n}|f(t,x,u,\omega_{1})-f(t,y,u,\omega_{2})|dt\leq \theta_{f}(d_{\Omega}(\omega_{1},\omega_{2})).
\ee
Additionally, the following property is introduced: 
$$\mathrm{\textbf{(C)}}\quad F(t,\varphi) \text{ takes convex values for each } (t,\varphi)\in [0,T]\times \Vspace. $$
Also, as noted in Remark \ref{F.compact} in the Appendix, $F(t,\varphi)$ takes compact values  for almost all $t\in [s,T]$ and all $\varphi\in\Vspace.$


\begin{remark}
    When ${\rm supp}\, \mu$ is finite, the resulting optimal control problem is finite-dimensional, so compactness of trajectories can be proved in a standard way (see {\em e.g.} \cite[Theorem 23.2]{clarke2013functional}).
\end{remark}

\begin{remark}
   If  $\Omega \subset \cR^d,$ whenever $\mu$ is absolutely continuous with respect to the Lebesgue measure in $\Omega$, and the latter coincides with the support of $\mu$,  hypothesis \eqref{Hyp.mu} holds.
\end{remark}

Henceforth, in cases where the dependence on the parameter variable $\omega$ can be omitted, we will adopt the notation $\varphi:=\varphi(\cdot)$ and $x(t):=x(t,\cdot)$ for the initial datum and trajectories, respectively,  and $f(t,x(t),u(t)):=f(t,x(t,\cdot ),u(t),\cdot )$ for the dynamics.

Let $\{x_k\}_k \subset C([s,T]: \Vspace)$  be a sequence of solutions of \eqref{dif.eq}  with $x_k(s )=\varphi \in \Vspace$ and controls  $u_k \in \Uad[s,T].$  Set
\be\label{Fk}
F_k(t ):=\int_{s}^{t} \Bar{f}_{k}(\sigma ) d\sigma,
\ \text{where} \ \Bar{f}_{k}(\sigma ):= f(\sigma,x_k(\sigma ),u_k(\sigma) ).
\ee
\begin{proposition}\label{Fkprecompact}
Let us assume that ${\bf (H)}$ and ${\bf (H_{\mu})}$ hold. Then, for all $t\in [s,T],$ the sequence $\{F_k(t )\}_k$ introduced in \eqref{Fk} is relatively compact in $\Vspace.$
\end{proposition}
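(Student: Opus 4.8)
The plan is to show that, for a fixed $t\in[s,T]$, the family $\{F_k(t)\}_k$ — viewed as genuine $\cR^n$-valued functions of $\omega$ via the formula in \eqref{Fk} — is a uniformly bounded and equicontinuous subset of $C(\Omega;\cR^n)$. Since $\Omega$ is a compact metric space, the Arzelà--Ascoli theorem then produces a uniformly convergent subsequence, and because $\mu$ is a finite measure, uniform convergence on $\Omega$ upgrades to convergence in $\Vspace$. This yields the asserted relative compactness. This is the elementary incarnation of the Rellich--Kondrakov--type mechanism alluded to in the introduction.

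First I would establish equicontinuity in the parameter $\omega$. Directly from \eqref{Fk}, for any $\omega_1,\omega_2\in\Omega$,
\[
|F_k(t)(\omega_1)-F_k(t)(\omega_2)| \le \int_s^t \big|f(\sigma,x_k(\sigma,\omega_1),u_k(\sigma),\omega_1)-f(\sigma,x_k(\sigma,\omega_2),u_k(\sigma),\omega_2)\big|\,d\sigma.
\]
As $u_k(\sigma)\in U(\sigma)$ a.e., the integrand is bounded pointwise by the supremum appearing in \eqref{fwregular} (with $x=x_k(\sigma,\omega_1)$, $y=x_k(\sigma,\omega_2)$); extending the integral to $[0,T]$ (the supremand being nonnegative) and invoking \eqref{fwregular} gives $|F_k(t)(\omega_1)-F_k(t)(\omega_2)|\le\theta_f(d_\Omega(\omega_1,\omega_2))$, a bound uniform in $k$ and $t$. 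Hence each $F_k(t)$ is continuous on $\Omega$, and the family is equicontinuous with common modulus $\theta_f$.

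Next I would produce a uniform $L^2$ bound. By Minkowski's integral inequality and the growth condition \eqref{fbounded},
\[
\|F_k(t)\|_{L^2}\le\int_s^t\|\bar f_k(\sigma)\|_{L^2}\,d\sigma\le c\int_s^t\big(\mu(\Omega)^{1/2}+\|x_k(\sigma)\|_{L^2}\big)\,d\sigma,
\]
and the first inequality of Lemma \ref{x.ineq} bounds $\|x_k(\sigma)\|_{L^2}$ uniformly in $\sigma\in[s,T]$ and in $k$ (the datum $\varphi$ being fixed). Thus there is $M>0$ with $\|F_k(t)\|_{L^2}\le M$ for all $k$.

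The main obstacle — and the step where \eqref{Hyp.mu} is indispensable — is promoting this $L^2$ bound to a uniform sup bound, which is what Arzelà--Ascoli requires. Fix $r>0$ and $\omega_0\in\Omega$. For every $\omega\in\mathcal B_r(\omega_0)$ the equicontinuity estimate gives $|F_k(t)(\omega_0)|\le|F_k(t)(\omega)|+\theta_f(r)$; integrating over $\mathcal B_r(\omega_0)$ against $\mu$ and applying Cauchy--Schwarz,
\[
\mu(\mathcal B_r(\omega_0))\,|F_k(t)(\omega_0)| \le \mu(\mathcal B_r(\omega_0))^{1/2}\,\|F_k(t)\|_{L^2}+\theta_f(r)\,\mu(\mathcal B_r(\omega_0)).
\]
Dividing by $\mu(\mathcal B_r(\omega_0))\ge h(r)>0$ yields $|F_k(t)(\omega_0)|\le M\,h(r)^{-1/2}+\theta_f(r)$, a bound uniform in $k$ and $\omega_0$. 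Therefore $\{F_k(t)\}_k$ is equibounded and equicontinuous in $C(\Omega;\cR^n)$, so by Arzelà--Ascoli it admits a subsequence converging uniformly to some $G\in C(\Omega;\cR^n)$; since $\|F_{k_j}(t)-G\|_{L^2}^2\le\mu(\Omega)\,\|F_{k_j}(t)-G\|_\infty^2\to0$, the convergence also holds in $\Vspace$, which proves relative compactness. The only delicate ingredient is the measure lower bound $h(r)>0$ in \eqref{Hyp.mu}: without it an $L^2$ bound cannot be converted into the pointwise control that Arzelà--Ascoli demands.
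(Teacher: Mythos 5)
Your proof is correct, but it takes a genuinely different route from the paper's. The paper verifies the two hypotheses of the abstract $L^2(\mu,\Omega;\cR^n)$ compactness criterion of Theorem \ref{compactL2}: boundedness of $\{F_k(t)\}_k$ (which you also establish) and the vanishing, as $r\to 0$, of $\sup_k\int_\Omega|F_k(t,\omega)-(F_k(t))_{\mathcal{B}_r(\omega)}|^2\,d\mu(\omega)$, which is extracted from \eqref{fwregular} by essentially the same computation you use for equicontinuity (compare \eqref{mFk}). You instead work in $C(\Omega;\cR^n)$: the estimate from \eqref{fwregular} gives a modulus of continuity $\theta_f$ uniform in $k$ and $t$, the lower bound $h(r)>0$ from \eqref{Hyp.mu} upgrades the $L^2$ bound to a sup bound, and the classical Arzel\`a--Ascoli theorem on the compact metric space $\Omega$ finishes the job, with finiteness of $\mu$ carrying uniform convergence back into $\Vspace$. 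Your version is more elementary --- it bypasses the cited Rellich--Kondrachov-type theorem entirely --- and proves something slightly stronger, namely relative compactness in the uniform topology; the paper's version is more robust, since it would survive replacing the pointwise oscillation bound \eqref{fwregular} by an averaged one, in line with the metric-measure Sobolev framework discussed after the proposition. Two small remarks. First, your averaging step actually needs only $\mu(\Omega)>0$: taking $r=\operatorname{diam}\Omega$, so that $\mathcal{B}_r(\omega_0)=\Omega$, already gives $|F_k(t)(\omega_0)|\le M\,\mu(\Omega)^{-1/2}+\theta_f(\operatorname{diam}\Omega)$; so \eqref{Hyp.mu} is not ``indispensable'' for your argument in the way you claim, whereas it is genuinely required by the paper's appeal to Theorem \ref{compactL2}. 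Second, you should justify the pointwise evaluations $x_k(\sigma,\omega)$ (fix a representative of $\varphi$ and solve the Carath\'eodory ODE for each $\omega$, which yields a representative of the $\Vspace$-valued trajectory); the paper's own display \eqref{mFk} takes the same liberty, so this is a shared, harmless omission rather than a gap.
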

\begin{proof} We will apply  Theorem \ref{compactL2} (see Appendix \ref{AppCompactness}) in $\Vspace$. Inequality \eqref{fbounded} and Lemma \ref{x.ineq}  guarantee that
    \be\label{fkbounded}
        ||\Bar{f}_{k}(\sigma) ||_{\Vspace} \leq C\left(\mu(\Omega)^{1/2}+\|\varphi\|_{\Vspace}  \right) 
    \ee 
    a.e. $\sigma \in [s,T]$ and all $k.$ It follows that, for any $t\in [s,T],$ the  sequence $\{F_k(t )\}_k$ is bounded  in $\Vspace.$ On the other hand, from \eqref{fwregular} we observe that
    {\small
    \be\label{mFk}
    \begin{aligned}
      & \int_{\Omega} \mid F_k(t,\omega)-(F_k(t ))_{\mathcal{B}_{r}(\omega)} \mid^2 d\mu(\omega) 
       \\
       &= \int_{\Omega} \bigg| \int_{s}^{t} \Bar{f}_{k}(\sigma,\omega)d\sigma - \frac{1}{\mu(\mathcal{B}_{r}(\omega))}\int_{\mathcal{B}_{r}(\omega)} \left[ \int_{s}^{t} \Bar{f}_{k}(\sigma,\omega')d\sigma \right] \, d\mu(\omega') \bigg|^2 d\mu(\omega) 
       \\
       &= \int_{\Omega} \bigg| \frac{1}{\mu(\mathcal{B}_{r}(\omega))}\int_{\mathcal{B}_{r}(\omega)}  \int_{s}^{t} \left[\Bar{f}_{k}(\sigma,\omega)-  \Bar{f}_{k}(\sigma,\omega')\right]d\sigma  \, d\mu(\omega') \bigg|^2 d\mu(\omega) 
       \\
       &\leq \int_{\Omega} \bigg| \frac{1}{\mu(\mathcal{B}_{r}(\omega))}\int_{\mathcal{B}_{r}(\omega)}  \theta_{f}(d_{\Omega}(\omega,\omega')) \, d\mu(\omega') \bigg|^2 d\mu(\omega)
       \\
       &\leq \mu(\Omega)[\theta_{f}(r)]^2.
    \end{aligned}
    \ee
    }
Thus, one gets
 $$\sup_{k}\int_{\Omega} \mid F_k(t,\omega)-(F_k(t ))_{\mathcal{B}_{r}(\omega)} \mid^2 d\mu(\omega) \to 0, \quad \text{as} \ r\rightarrow 0,$$
 and the proof is concluded.
\end{proof}

\begin{remark}
The property \eqref{fwregular} implies the continuity of the maps
$$
\omega \mapsto F_k(t,\omega), \ \ \text{for any} \ \ t \in [s,T] \ \text{and} \ \ k.
$$
Moreover, if we modify condition \eqref{fwregular} and put the identity on the right hand-side of the inequality, instead of $\theta_f$, we deduce that, for all $t\in [s,T]$ and all $k$, $F_k(t) \in W^{1,2}(\mu,\Omega, d_{\Omega};\mathbb{R}^n)$. In fact, following Hajłasz \cite{Hajlasz1996}, the Sobolev space $W^{1, p}(\mu,\Omega, d_{\Omega};\mathbb{R}^n)$ with $p\in [1,+\infty)$ is defined as follows: $f \in W^{1, p}(\mu,\Omega, d_{\Omega};\mathbb{R}^n)$ if and only if $f \in L^p( \mu,\Omega;\mathbb{R}^n)$ and there exists a positive function $g_{f} \in L^p(\mu,\Omega;\mathbb{R}^n)$ such that
\be\label{L1P}
|f(\omega)-f(\omega')| \leq d_{\Omega}(\omega, \omega')(g_{f}(\omega)+g_{f}(\omega'))
\ee
almost everywhere. The space is equipped with the norm 
\benl
\|f\|_{W^{1, p}(\mu,\Omega, d_{\Omega};\mathbb{R}^n)}:=\|f\|_{L^p( \mu,\Omega;\mathbb{R}^n)}+\inf _{g_{f}}\|g_{f}\|_{L^p( \mu,\Omega;\mathbb{R}^n)},
\eenl
the infimum being taken over all positive $L^p$ functions $g_{f}$  satisfying \eqref{L1P}. 
\end{remark}

 It is worth mentioning that the Banach space $W^{1, 2}(\mu,\Omega, d_{\Omega};\mathbb{R}^n)$ is not necessarily a Hilbert space, unless $\Omega\subset\cR^d,$ in which case, it coincides with the classical Sobolev space $H^{1}(\Omega;\mathbb{R}^n)$ and the well-known compact embedding results hold. However, for an arbitrary measurable metric space $(\Omega,d_{\Omega},\mu)$ this is not the case. A compact embedding result can be obtained if the measure $\mu$ is doubling, {\em i.e.,}  whenever $\mu$ is a positive Borel measure satisfying the condition $$0<\mu(\mathcal{B}_{2r}(\omega))\leq C_{\mu}\mu(\mathcal{B}_{r}(\omega)) < \infty$$
for all $\omega \in \Omega$, $r>0$, and some positive constant $C_{\mu}$. Under this hypothesis, the space $W^{1, p}(\mu,\Omega, d_{\Omega};\mathbb{R}^n)$ is compactly embedded in $L^p( \mu,\Omega;\mathbb{R}^n)$, which gives a version of the Rellich–Kondrachov Theorem for these Sobolev spaces defined in metric spaces \cite{Hajlasz2000, Agnieszka}. Note that condition \eqref{Hyp.mu} is weaker than requiring $\mu$ to be a doubling measure. 
\begin{remark}
    Observe that if the initial datum is such that $\varphi\in C(\Omega;\cR^n)$ (or constant, as supposed in \cite{BettiolKhalil}) the condition \eqref{fwregular} of ${\bf (H_{\mu})}$ in Proposition \ref{Fkprecompact}  can be droppep. In fact, the Lipschitz continuity of $f$ (condition \eqref{fLipschitz}), the continuity of the map $\omega \mapsto f(t,x,u,\omega)$ and the Grönwall's inequality ensure the continuity of $\omega \mapsto F_k(t,\omega)$ for all $t\in [0,T]$ and,  consequently, the sequence $\{F_k(t)\}_k$ satisfies the hypothesis of Theorem \ref{compactL2}. 
\end{remark}

Following Frankowska  \cite[Theorem 2.7]{FRANKOWSKA1990}, one may consider the following alternative and stronger hypothesis:
there exists a compact $K\subset \Vspace$ such that  $$\textbf{(K)} \quad \text{for every} \quad (t,\varphi)\in [s,T]\times \Vspace,\quad F(t,\varphi)\subset K.$$
Then, for all $t \in [s,T],$ we observe that $\{F_k(t)\}_k \subset (T-s)K$ which is evidently relatively compact in $\Vspace.$ Thus, we obtain the result below.

\begin{theorem}[Compactness of the set of trajectories]
\label{compactS}
Under conditions ${\bf (H)},$ ${\bf (H_{\mu})}$ (or, alternatively, ${\bf (K)})$ and ${\bf {(C)}},$ for all $s \in [0,T)$ and $\varphi \in \Vspace),$  the set 
    $
S_{[s,T]}(\varphi)$ is  compact in  $C([s,T]: \Vspace).$
\end{theorem}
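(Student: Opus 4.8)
The plan is to obtain compactness of $S_{[s,T]}(\varphi)$ in $C([s,T]:\Vspace)$ by establishing two things: that the set is relatively compact and that it is closed. Take an arbitrary sequence $\{x_k\}_k \subset S_{[s,T]}(\varphi)$; by Filippov's Lemma (Corollary \ref{selection}) each $x_k$ solves \eqref{dif.eq} for some control $u_k \in \Uad[s,T]$, and I write $\bar{f}_k(\sigma) := f(\sigma, x_k(\sigma), u_k(\sigma))$ and $F_k(t) = \int_s^t \bar{f}_k$ as in \eqref{Fk}, so that $x_k(t) = \varphi + F_k(t)$. The goal is to extract a subsequence converging in $C([s,T]:\Vspace)$ to some $x^*$ and then to verify that $x^*$ is itself a solution of the inclusion \eqref{inclu}.

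For relative compactness I would invoke the Arzel\`a--Ascoli theorem in $C([s,T]:\Vspace)$, which requires pointwise relative compactness together with equicontinuity. The pointwise relative compactness of $\{x_k(t)\}_k = \{\varphi + F_k(t)\}_k$ is exactly Proposition \ref{Fkprecompact} under ${\bf (H_{\mu})}$ (or is immediate under ${\bf (K)}$, as noted in the text, since then $\{F_k(t)\}_k \subset (T-s)K$). Equicontinuity---indeed a uniform Lipschitz bound---follows at once from the uniform estimate \eqref{fkbounded}: for $\tau \le t$ one has $\|x_k(t) - x_k(\tau)\|_{\Vspace} = \big\|\int_\tau^t \bar{f}_k(\sigma)\,d\sigma\big\|_{\Vspace} \le C\big(\mu(\Omega)^{1/2} + \|\varphi\|_{\Vspace}\big)(t-\tau)$, uniformly in $k$. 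Arzel\`a--Ascoli then yields a subsequence (not relabeled) with $x_k \to x^*$ in $C([s,T]:\Vspace)$.

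For closedness I would argue that $x^*$ solves \eqref{inclu}. The uniform bound \eqref{fkbounded} shows $\{\bar{f}_k\}_k$ is bounded in $L^2([s,T]:\Vspace)$, so along a further subsequence $\bar{f}_k \rightharpoonup w$ weakly in $L^2([s,T]:\Vspace)$; passing to the limit in $x_k(t) = \varphi + \int_s^t \bar{f}_k$ identifies $x^*(t) = \varphi + \int_s^t w$, whence $\dot{x}^* = w$ a.e. It then remains to prove the pointwise inclusion $w(\sigma) \in F(\sigma, x^*(\sigma))$ for a.e. $\sigma$. This is the classical convergence-theorem argument: by Mazur's Lemma suitable convex combinations of the $\bar{f}_k$ converge strongly in $L^2$, hence, along a subsequence, pointwise to $w(\sigma)$ for a.e. $\sigma$; combining this with $x_k(\sigma) \to x^*(\sigma)$, the upper semicontinuity of $\varphi \mapsto F(\sigma,\varphi)$, the compactness of its values, and crucially the convexity assumption ${\bf (C)}$ (which keeps the convex combinations inside an $\eps$-enlargement of $F(\sigma,x^*(\sigma))$), one concludes $w(\sigma) \in F(\sigma, x^*(\sigma))$. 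Since this says $\dot{x}^*(\sigma) \in F(\sigma, x^*(\sigma))$ a.e., we obtain $x^* \in S_{[s,T]}(\varphi)$.

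The main obstacle is this last step, and the two pieces of supporting machinery that must be set up with care are: (a) the upper semicontinuity, with compact values, of the Nemytskii-type multifunction $\varphi \mapsto F(\sigma,\varphi)$ from $\Vspace$ into $\Vspace$, which I would derive from the growth and Lipschitz bounds \eqref{fbounded}--\eqref{fLipschitz}, the continuity of $f$ in $(x,u)$, and the compactness and pseudo-continuity of $U(\sigma)$ (Proposition \ref{multifunctionU}), using dominated convergence to pass to the $L^2$-limit; and (b) the passage from weak $L^2$-convergence of the derivatives to the pointwise inclusion, where the convexity ${\bf (C)}$ is indispensable, since without it the weak limit could leave the values of $F$.
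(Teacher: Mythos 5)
Your proposal is correct and follows essentially the same route as the paper: Arzel\`a--Ascoli (pointwise relative compactness from Proposition \ref{Fkprecompact} or {\bf (K)}, equicontinuity from the uniform bound \eqref{fkbounded}), followed by a weak-compactness-plus-Mazur argument to show the limit's derivative stays in $F(\sigma,x^*(\sigma))$, with the convexity assumption {\bf (C)} and the pseudo-continuity of $U$ (the paper packages your step (a) as Proposition \ref{cof}) doing exactly the work you identify. The only cosmetic differences are that you extract the weak limit in $L^2$ rather than weak-$*$ in $L^\infty$, and the paper additionally invokes Filippov's Lemma at the end to recover a measurable control, which is not strictly needed since $S_{[s,T]}(\varphi)$ is defined via the inclusion.
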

\begin{proof} First, we prove that $S_{[s,T]}(\varphi)$ is relatively compact. To achieve this, it is sufficient to verify the hypotheses of the Arzelà-Ascoli Theorem:
        \begin{itemize}
        \item[{\em (1)}] $S_{[s,T]}(\varphi)$ is equicontinuous,
        \\
        \item[{\em (2)}] for all $t\in [s,T],$ 
        \be\label{X(t)}
        \mathcal{X}(t):= \left\{x(t)\right\}_{x \in S_{[s,T]}(\varphi)} \ \text{is relatively compact in} \  \Vspace. 
        \ee
        
    \end{itemize}
\noindent {\em Proof of (1):} Let $t,\tau \in [s,T],$ then for all $x\in S_{[s,T]}(\varphi)$ from Lemma \ref{x.ineq}, it follows that 
$$
\begin{aligned}
    ||x(t) - x(\tau)& ||_{\Vspace} 
    \leq  C\left(\mu(\Omega)^{1/2}+\|\varphi\|_{\Vspace}  \right)|t-\tau |.
\end{aligned}
$$
Consequently, for all $\epsilon >0,$ if $|t-\tau|< \delta := \epsilon \left[C\left(\mu(\Omega)^{1/2}+\|\varphi\|_{\Vspace}  \right)\right]^{-1},$ we have that $||x(t) - x(\tau) ||_{\Vspace}< \epsilon,$ for all $x \in S_{[s,T]}(\varphi).$
\\

\noindent {\em Proof of (2):} If $\textbf{(K)}$ holds, {\em (2)} follows  straightforwardly. Suppose alternatively that ${\bf (H_{\mu})}$ holds and let $t\in [s,T]$ be fixed and $\left\{x_k(t )\right\}_k$ be a sequence in $\mathcal{X}(t).$ For each $k,$ there exists a control $u_k \in \mathcal\Uad[s,T]$ such that 
\be
\label{xk}
    x_k(t )=\varphi+F_{k}(t ),
\ee
where $F_{k}$ was defined in \eqref{Fk}. From Proposition \ref{Fkprecompact},  $\{F_k(t )\}_{k}$ is relatively compact in  $\Vspace,$ thus $\{x_k(t )\}$ is relatively compact in $\Vspace.$ Then,  the Arzelà-Ascoli Theorem ensures the relative compactness of the set $S_{[s,T]}(\varphi)$   in $C([s,T]: \Vspace).$


Now, suppose that $\{x_k\}_k$ is a strongly convergent sequence in $S_{[s,T]}(\varphi)$ such that 
\be
\label{xkstrongx}
x_k \stackrel{s}{\rightarrow} x \quad  \, \text{in} \, \, C([s,T]: \Vspace).
\ee
We prove next that $x \in S_{[s,T]}(\varphi).$ From \eqref{xk}  we have that
$$ 
F_k \stackrel{s}{\rightarrow} F^*\, \, \quad \text{in}\, \, \, C([s,T]: \Vspace), 
$$
for some $F^* \in C([s,T]: \Vspace).$ Then 
\be\label{xlimit}
x(t)=\varphi+F^*(t ), \ \ t\in [s,T].
\ee
On the other hand, by \eqref{fkbounded} we observe that there exists a subsequence (keeping the same index) of $\{F'_k\}=\{\Bar{f}_{k}\} $ satisfying
\be
\label{fweaks}
\Bar{f}_{k} \stackrel{*}{\rightharpoonup} \Bar{f} \, \, \quad \text{in} \, \, L^{\infty}([s,T]:\Vspace),\ee
for some $\Bar{f} \in L^{\infty}([s,T]:\Vspace)$. Moreover, in view of   Theorem \ref{convergences}, 
we can extract a subsequence such that
$$\Bar{f}_{k} \stackrel{w}{\rightharpoonup} \dot{x} \, \, \quad \text{in} \, \, L^1([s,T]:\Vspace).$$
Thus, from the uniqueness of the limits in latter two equations and using \eqref{xlimit}, we deduce that $x$ satisfies 
$$
x(t)=\varphi+\int_{s}^{t}\Bar{f}(\sigma )d\sigma \ \ \quad \text{for } t\in[s,T].
$$
By \eqref{xkstrongx}, for all $\varepsilon>0,$ there exists $k_0$ such that 
$$
x_k(t )\in \mathcal{B}_{\varepsilon}(x(t)) \quad \ \text{for all } \ t\in [s,T], \ \ k\geq k_0,
$$
and 
$$
u_k(t) \in U(t) \subset U(\mathcal{B}_{\varepsilon}(t)) \  \quad \ \text{for all } \ t\in [s,T], \ \ k\geq k_0
$$
Consequently,
\be\label{fkinclu}
\Bar{f}_k(t ) \in f\left(t, \mathcal{B}_{\varepsilon}(x(t)),U(\mathcal{B}_{\varepsilon}(t)) \right) \quad \ \text{a.e.} \ t\in [s,T], \ \ \text{all } \ k\geq k_0.
\ee
The limit in \eqref{fweaks} and the Mazur's Lemma provide us with coefficients $\alpha_{ij}\geq0,$ for which $\sum_{i\geq 1}^{N(i)}\alpha_{ij}=1$ for each $j,$ such that, for some  subsequences $\{\Bar{f}_{ij}\}_{j}$  of $\{\Bar{f}_k\}_{k}$ and  some $p>1,$ the following limit stands
\be
\psi_{j}:=\sum_{i\geq 1}^{N(i)}\alpha_{ij}\Bar{f}_{ij}\stackrel{s}{\longrightarrow} \Bar{f} \quad \text{in} \ L^p([0,T]:\Vspace).
\ee
Then, we deduce that 
$$
\psi_{j}(t )\stackrel{s}{\rightarrow} \Bar{f}(t ) \ \ \text{in} \ \ \Vspace \quad \ \text{a.e.} \ t\in [s,T],
$$
and from \eqref{fkinclu} we have that
$$
\psi_{j}(t ) \in  \operatorname{co} f\left(t, \mathcal{B}_{\varepsilon}(x(t)),U(\mathcal{B}_{\varepsilon}(t)) \right) \ \quad  \text{a.e.} \ \ t\in [s,T].
$$
Thus, for all $\varepsilon>0$ 
$$
\Bar{f}(t ) \in  \overline{\operatorname{co}} f\left(t, \mathcal{B}_{\varepsilon}(x(t)),U(\mathcal{B}_{\varepsilon}(t)) \right) \ \quad  \text{a.e.} \ \ t\in [s,T],
$$
Since the multifunction $U$ is pseudo-continuous, by Proposition \ref{cof} we deduce that 
$$
\Bar{f}(t )\in f\left(t, x(t),U(t) \right) \ \ \text{a.e.} \ \ t\in [s,T].
$$
Then, in view of Corollary \ref{selection},  there exists a measurable control $u: [s,T]\rightarrow \cR^m$, such that
\[
\left\{
\begin{array}{l}
u(t) \in U(t) \quad \text { a.e. } t \in[s, T], \\ \\
\Bar{f}(t )=f(t, x(t), u(t)) \quad \text { a.e. } t \in[s, T].
\end{array}
\right.
\]
This leads us to conclude that $x\in S_{[s,T]}(\varphi)$ and thus the proof follows.
\end{proof}

\begin{remark}
Under hypothesis ${\bf (H_{\mu})}$ or ${\bf (K)},$ the arguments utilized to establish the Arzelà-Ascoli conditions in the proof of Theorem \ref{compactS} above can be adapted to demonstrate the compactness of the operator $\mathcal{I}$ defined in \eqref{OP.INT}.
\end{remark}

\subsection{Lower semi-continuity of the value function}

With the compactness of the set of trajectories in hand, we can solve problem $(P)_{s,\varphi}$ once we have established the lower semi-continuity of the functional ${\mathcal J}$. The result below characterizes the lower semi-continuity of the value function and gives a result of the existence of optimal trajectories. We observe that the value function $V
 $ can be redefined as follows: for all $\left(s, \varphi\right) \in[0, T] \times \Vspace$, 
 \benl\label{vf2}
 V \left(s, \varphi\right):=\inf \Bigg\{ \int_{\Omega}g(x(T,\omega),\omega)d\mu(\omega): x \in S_{\left[s, T\right]}\left(\varphi\right)\Bigg\}.
 \eenl
\begin{theorem}
\label{charV}
If ${\bf (H)},\ ({\bf (H_{\mu})}$ or ${\bf (K)}),$ and $\mathrm{\textbf{(C)}}$  hold true, 
then $V$ is lower semi-continuous and,  
\begin{multline}\label{Vmin}
    V\left(s, \varphi\right)=\min \left\{ \int_{\Omega}g(x(T,\omega),\omega)d\mu(\omega): x \in S_{\left[s, T\right]}\left(\varphi\right)\right\},\\
    \text{for all} \,\, \left(s, \varphi\right) \in[0, T] \times \Vspace. 
\end{multline}

Furthermore, for all $\bar{\varphi} \in \Vspace,$
\be\label{Extpoints}
\begin{aligned} 
\int_{\Omega}g(\bar{\varphi}(\omega),\omega)d\mu(\omega)&=\liminf _{s \rightarrow T^{-}, \varphi \rightarrow \bar{\varphi}} V(s, \varphi),
\\ \\
V(0, \bar{\varphi})&=\liminf _{s \rightarrow 0^{+}, \varphi \rightarrow \bar{\varphi}} V(s, \varphi).
\end{aligned}
\ee
\end{theorem}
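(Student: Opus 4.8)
The plan is to deduce everything from three ingredients already available: the lower semicontinuity of the integral functional $\mathcal{J}$ with respect to strong $L^2$-convergence, the compactness of $S_{[s,T]}(\varphi)$ (Theorem \ref{compactS}), and the quantitative estimates of Lemma \ref{x.ineq} together with the Dynamic Programming Principle (Theorem \ref{DPP}). As a preliminary step I would record that $\mathcal{J}$ is strongly l.s.c.: if $\varphi_n\to\varphi$ in $\Vspace$, a subsequence converges $\mu$-a.e. and is dominated by a fixed $L^2$ function $H$; the quadratic lower bound in {\bf (H)}(iv) then gives $g(\varphi_n(\omega),\omega)\ge a(\omega)-bH(\omega)^2\in L^1(\mu)$, so Fatou's Lemma combined with the lower semicontinuity of $g(\cdot,\omega)$ yields $\mathcal{J}(\varphi)\le\liminf_n\mathcal{J}(\varphi_n)$. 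The same lower bound and Lemma \ref{x.ineq}(1) show $\mathcal{J}(x(T))\ge\int_\Omega a\,d\mu-bC(T)^2e^{2c(T-s)}(\mu(\Omega)^{1/2}+\|\varphi\|_{L^2})^2$ uniformly over $x\in S_{[s,T]}(\varphi)$, so $V(s,\varphi)>-\infty$.

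To obtain \eqref{Vmin} I would take a minimizing sequence $x_k\in S_{[s,T]}(\varphi)$ with $\mathcal{J}(x_k(T))\to V(s,\varphi)$. By Theorem \ref{compactS} a subsequence converges in $C([s,T]:\Vspace)$ to some $x^*\in S_{[s,T]}(\varphi)$; in particular $x_k(T)\to x^*(T)$ strongly in $\Vspace$, so the l.s.c. of $\mathcal{J}$ gives $\mathcal{J}(x^*(T))\le\liminf_k\mathcal{J}(x_k(T))=V(s,\varphi)$, while $x^*\in S_{[s,T]}(\varphi)$ forces $\mathcal{J}(x^*(T))\ge V(s,\varphi)$. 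Hence the infimum is attained and \eqref{Vmin} holds.

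The main work is the lower semicontinuity of $V$ on $[0,T]\times\Vspace$. Given $(s_n,\varphi_n)\to(s,\varphi)$, I pass to a subsequence realizing $\liminf_n V(s_n,\varphi_n)$ and, by \eqref{Vmin}, pick optimal $x_n\in S_{[s_n,T]}(\varphi_n)$ with control $u_n$, so that $V(s_n,\varphi_n)=\mathcal{J}(x_n(T))$. The difficulty is that the $x_n$ live on the moving intervals $[s_n,T]$; the idea is to transfer each to a trajectory $w_n\in S_{[s,T]}(\varphi)$ with $\|w_n(T)-x_n(T)\|_{L^2}\to0$. When $s_n\le s$ I restrict $x_n$ and $u_n$ to $[s,T]$ and let $w_n$ be the trajectory from $(s,\varphi)$ driven by $u_n$; when $s_n\ge s$ I first extend $u_n$ to $[s,s_n)$ by an admissible selection. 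In both cases Lemma \ref{x.ineq}(4) bounds the displacement created by the shifted initial time (of order $|s_n-s|$) and Lemma \ref{x.ineq}(2) propagates this error together with $\|\varphi-\varphi_n\|_{L^2}$ to the terminal time with the Lipschitz factor $e^{k(T-\min(s,s_n))}$. Since $S_{[s,T]}(\varphi)$ is compact, a further subsequence gives $w_n\to w\in S_{[s,T]}(\varphi)$ in $C([s,T]:\Vspace)$, whence $x_n(T)\to w(T)$; the l.s.c. of $\mathcal{J}$ then yields $\liminf_n V(s_n,\varphi_n)=\liminf_n\mathcal{J}(x_n(T))\ge\mathcal{J}(w(T))\ge V(s,\varphi)$. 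I expect this gluing of the trajectory estimates—handling the control extension and the two subcases—to be the principal obstacle.

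Finally, for \eqref{Extpoints} both ``$\ge$'' inequalities are immediate from the just-proved lower semicontinuity of $V$ together with $V(T,\bar\varphi)=\mathcal{J}(\bar\varphi)$ (the terminal value). For the reverse inequality at $T$, I would run the dynamics backward: fixing any measurable selection $u(t)\in U(t)$, the backward Cauchy problem $\dot{x}=f(t,x,u(t),\cdot)$, $x(T,\cdot)=\bar\varphi$ has a unique solution $x\in C([0,T]:\Vspace)$, and setting $\varphi_n:=x(s_n)$ for $s_n\uparrow T$ gives $x|_{[s_n,T]}\in S_{[s_n,T]}(\varphi_n)$ with endpoint $\bar\varphi$, so $V(s_n,\varphi_n)\le\mathcal{J}(\bar\varphi)$ while $\varphi_n\to\bar\varphi$ by continuity; this forces the liminf at $T$ to be $\le\mathcal{J}(\bar\varphi)$. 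For the reverse inequality at $0$, I would instead use the Dynamic Programming Principle \eqref{DPP_2}: for each $s>0$ and $\varepsilon>0$ it provides a control $u$ with $V(s,x_{\bar\varphi,u}(s))\le V(0,\bar\varphi)+\varepsilon$, and Lemma \ref{x.ineq}(4) gives $\|x_{\bar\varphi,u}(s)-\bar\varphi\|_{L^2}\le C(T)\,s\to0$ uniformly in $u$, so the points $(s,x_{\bar\varphi,u}(s))$ approach $(0,\bar\varphi)$ with value at most $V(0,\bar\varphi)+\varepsilon$; letting $s\to0^+$ and then $\varepsilon\to0$ completes the identity.
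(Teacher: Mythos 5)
Your proposal is correct and follows essentially the same route as the paper: lower semicontinuity of $\mathcal{J}$ combined with compactness of $S_{[s,T]}(\varphi)$ yields \eqref{Vmin}; the lower semicontinuity of $V$ is obtained by comparing optimal trajectories from $(s_n,\varphi_n)$ with trajectories issued from $(s,\varphi)$ on a common interval via the estimates of Lemma \ref{x.ineq} and then passing to the limit using compactness; and \eqref{Extpoints} is handled by backward solving at $t=T$ and the Dynamic Programming Principle at $t=0$. The only (harmless) deviations are that you prove the lower semicontinuity of $\mathcal{J}$ directly by Fatou's Lemma rather than invoking Theorem \ref{chlsm}, you transplant each optimal trajectory to an element of the fixed set $S_{[s,T]}(\varphi)$ driven by the same control so that Theorem \ref{compactS} applies verbatim (the paper instead re-runs that compactness argument for the moving intervals), and at $s=0$ you use $\varepsilon$-optimal controls where the paper uses the minimizer guaranteed by \eqref{Vmin}.
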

\begin{proof}
We note that the compactness of $S_{\left[s, T\right]}\left(\varphi\right)$ implies the compactness of the set $\mathcal{X}(T)$ (defined in \eqref{X(t)}) in $\Vspace$ and, from Theorem \ref{chlsm} (see Appendix), the functional ${\mathcal J}$ is lower semi-continuous. Thus, we obtain \eqref{Vmin}.

In order to prove the semi-continuity of $V,$ we take a sequence $(s_k,\varphi_k)$ converging strongly to $(s,\varphi)$ in  $[0,T]\times \Vspace.$ Let $\bar{x}_k\in S_{[s_k,T]}(\varphi_k)$ be the optimal trajectory associated to $V(s_k,\varphi_k)$ with control $\bar{u}_k\in \Uad[s_k,T]$ for all $k.$ 
We know that the sequence of trajectories $\{\bar{x}_k\}_k$ can be written as 
$$
\bar{x}_k(t )
=\varphi_k+\int_{s_k}^{t}f(\sigma,\bar{x}_k(\sigma ),\bar{u}_k(\sigma)) d\sigma \quad \text{for all} \quad t\in [s_k,T]. 
$$
or
\be\label{barxk}
\bar{x}_k(t)
=\bar{x}_k(s)+\int_{s}^{t}f(\sigma,\bar{x}_k(\sigma ),\bar{u}_k(\sigma)) d\sigma \quad \text{for all} \quad t\in [s_k,T]. 
\ee
where
$$
\bar{x}_k(s)
=\varphi_k+\int_{s_k}^{s}f(\sigma,\bar{x}_k(\sigma ),\bar{u}_k(\sigma)) d\sigma. 
$$
By employing the convergence assumptions mentioned above and that the dynamics $f$
satisfies the inequality \eqref{fkbounded}, we obtain that $$\lim_{k\rightarrow\infty}\int_{s_k}^{s}f(\sigma,\bar{x}_k(\sigma ),\bar{u}_k(\sigma)) d\sigma=0 \quad \text{in} \quad \Vspace,$$  wich implies that $\bar{x}_k(s) \rightarrow \varphi$ in $\Vspace.$
On the other hand, from the proof of Proposition \ref{Fkprecompact} we note that, for all $t\in [s,T]$ the sequence 
{\small$$
 \left\{\int_{s}^{t}f(\sigma,\bar{x}_k(\sigma ),\bar{u}_k(\sigma)) d\sigma\right\}_k
$$}is bounded in $\Vspace$ and satisfies the inequality \eqref{mFk}, then, from Theorem \ref{compactL2}, it is relatively compact in $\Vspace,$ and arguing as in the proof of the Theorem \ref{compactS}, from \eqref{barxk}, we deduce that there exists a subsequence of $\{\bar{x}_k\}_k$ (using the same index) and a process $(x^*,u^*)$ with $x^*\in S_{[s,T]}(\varphi)$ and $u^*\in \Uad[s,T],$ such that $\bar{x}_k \rightarrow x^*$ in $C([s,T]:\Vspace)$ as $k\rightarrow\infty,$ in particular, we observe that 
$\bar{x}_k(T ) \rightarrow x^*(T)$ in $\Vspace$ as $k\rightarrow\infty,$ then, from the semi-continuity of the functional ${\mathcal J}$ we obtain 
\benl
\begin{aligned}
 \liminf_{k\rightarrow \infty}V(s_k,\varphi_k)&=\liminf_{k\rightarrow\infty}\int_{\Omega}g(\bar{x}_k(T,\omega),\omega)d\mu(\omega)
 \\
&\geq\int_{\Omega}g(x^*(T,\omega),\omega)d\mu(\omega)\geq V(s,\varphi).   
\end{aligned}
\eenl

To prove \eqref{Extpoints}, we consider $\varphi$ in $\Vspace$ and  $s_k \rightarrow T^{-},$ $\varphi_k\rightarrow \varphi,$ then the lower semi-continuity of $V$ implies that 
$$
V(T,\varphi)\leq \liminf_{k\rightarrow \infty}V(s_k,\varphi_k).
$$
On the other hand, consider $\{\psi_k\}_k$ in $\Vspace$ and $y_k\in S_{\left[s_k, T\right]}\left(\psi_k\right)$ such that $y_k(T )=\varphi,$ and since there exists a positive constant $C$ (Lemma \ref{x.ineq}) such that 
\benl
    ||\varphi-\psi_k ||_{\Vspace}=||y_k(T ) - y_k(s_k ) ||_{\Vspace}  \leq  C
    |T-s_k |,
\eenl
we conclude that $\psi_k \rightarrow \varphi$ in $\Vspace$ as $k\rightarrow\infty,$ and 
$$
V(s_k,\psi_k)\leq\int_{\Omega}g(y_k(T,\omega),\omega)d\mu(\omega)=\int_{\Omega}g(\varphi(\omega),\omega)d\mu(\omega)=V(T,\varphi),
$$
thus, 
$$
 \liminf_{k\rightarrow \infty}V(s_k,\psi_k)\leq V(T,\varphi).
$$
and the first equality in \eqref{Extpoints} holds.
Now, we denote $x\in S_{\left[0, T\right]}\left(\varphi\right)$ as the optimal trajectory associated to $V(0,\varphi),$ from the Dynamic Programming Principle we know that
$$
V(0,\varphi)=V(s,x(s )) 
 \quad \text{for all} \quad s\in [0,T],$$
 and the second equality follows thanks to the lower semi-continuity of $V.$ 
\end{proof}

\section{Invariance Principles and Proximal Analysis}\label{DP.and.IP}

This section aims at proving that the value function is the unique lower semi-continuous solution of the Hamilton-Jacobi-Bellman equation. We will make use of notions from  proximal normal analysis, the Dynamic Programming Principle 
and invariance principles from \cite{Donchev}.

We begin by recalling basic definitions of invariance principles. Let $X$ be a Hilbert space, $D:[0,\infty)\leadsto X$, $\Gamma:[0,\infty)\times X\leadsto X$  be given multifunctions and $G:=\mathrm{Gr}\,D$.
\begin{definition} Let $D:[0, \infty) \leadsto X$ be closed-valued. We say that $D$ is {\em left absolutely continuous}  when, for every $T,$ every bounded  subset $B\subset X$ and every $\varepsilon>0,$ there exists $\delta>0$ such that, for every sequence $\left\{\left(t_i, s_i\right)\right\}_{i=1}^{\infty}$ of open pairwise disjoint subintervals of $[0, T],$ one has that 
\begin{equation}\label{LAC}
\sum_{i=1}^{\infty}\left(s_i-t_i\right)<\delta \implies \sum_{i=1}^{\infty} \operatorname{ex}\left(D\left(t_i\right) \cap B, D\left(s_i\right)\right)<\varepsilon,
\end{equation}
where $\operatorname{ex}(U,V) := \sup_{u\in U}\operatorname{dist}(u,V)$ and $\operatorname{dist}(u,V):=\inf_{v\in V}|u-v|$.
For right absolute continuity and absolute continuity, one replaces the expression in the sum of the right hand-side of \eqref{LAC} by 
$\operatorname{ex}\left(D\left(s_i\right) \cap B,D\left(t_i\right)\right)$ respectively, by $\operatorname{ex}_{B}\left(D\left(t_i\right), D\left(s_i\right)\right),$ where
$$
\operatorname{ex}_{B}\left(D\left(t_i\right), D\left(s_i\right)\right):=\max \left\{\operatorname{ex}\left(D\left(t_i\right) \cap B, D\left(s_i\right)\right), \operatorname{ex}\left(D\left(s_i\right) \cap B, D\left(t_i\right)\right)\right\}.
$$
\end{definition}

\begin{definition}[Weak invariance]
\label{def:weak}
The graph $G$ of $D$ is {\em weakly invariant} w.r.t. the set-valued dynamics $\dot{x}\in \Gamma(t,x)$ (and we write $(\Gamma, G)$ is weakly invariant) if, for any initial condition $x_0\in D(t_0),$ there exists $T>t_0,$ such that the Cauchy problem 
\begin{equation}\label{inv_dyn}
\left\{
    \begin{aligned}
    &\dot{x}(t)\in \Gamma(t,x(t))\quad t\in[t_0,T],\\
    &x(t_0)=x_0,
    \end{aligned}
\right.
\end{equation}
admits a solution $x(t)\in D(t)$ for all $t\in[t_0,T)$.
\end{definition}
\begin{definition}[Strong invariance]
\label{def:strong}
The graph $G$ of $D$ is {\em strongly invariant} w.r.t. the set-valued dynamics $\dot{x}\in \Gamma(t,x)$ (and we write $(\Gamma, G)$ is strongly invariant) if, for any initial condition $x_0\in D(t_0)$ and any $T>t_0$, every solution of the Cauchy problem \eqref{inv_dyn} satisfies the condition $x(t)\in D(t),$ for all $t\in[t_0,T]$.
 \end{definition}
\begin{definition}[Proximal normal \cite{Clarke1998}]
    Let $K \subset X$ be a closed set. A {\em proximal normal} to $K$ at a point $x \in K$ is a vector $\xi \in X$ such that there exists $\lambda>0$ satisfying $$\left\langle\xi, x^{\prime}-x\right\rangle_{X} \leq \lambda||x^{\prime}-x||^2_X \quad  \text{for all } x^{\prime} \in K.$$ The set of all such vectors is a cone denoted by $N_K^{P}(x)$ and called {\em proximal normal cone} to $K$ at $x$. 
\end{definition}
\begin{definition}[Proximal subgradient and subdifferentials \cite{Clarke1998}]
    A vector $\xi \in X$ is called a {\em proximal subgradient} (shortly, {\em $P$-subgradient}) of an extended -valued lower semi-continuous function $V:X\rightarrow \cR \cup \{ +\infty\}$ at $x \in \operatorname{dom}V$ whenever
$$
(\xi,-1) \in N_{\text {epi } V}^P(x, V(x)) .
$$
The (possibly empty) set of all such $\xi$ is denoted by $\partial_P V(x)$, and is referred to as {\em proximal subdifferential} or {\em $P$-subdifferential}. If $V$ is an upper semi-continuous function, one sets $\partial^{P}V(x):=-\partial_{P}(-V(x)).$
\end{definition}

Now, we consider the  Hilbert space $X=\Vspace$ and introduce the notion of {\em proximal solution} for the HJB equation \eqref{HJB} which, in the continuous case, is equivalent to the definition of viscosity solution \cite{ClarkeLedyaev1994} introduced by Crandall \& P.-L. Lions \cite{CraLio83}.
\begin{remark}
    In view of Fillipov's Lemma \ref{Filippov}, the Hamiltonian $$H:[0,T]\times \Vspace \times \Vspace^* \rightarrow \mathbb{R}$$ defined in \eqref{Hamiltonian} can be rewritten as follows
   \be
H(t,\varphi,p)=\min_{v\in F(t,\varphi)} \langle v, p\rangle. 
   \ee
\end{remark}
\begin{definition}[Proximal solution \cite{ClarkeLedyaev1994}]
\label{prox.sol}
Let $A\subset [0,T]\times\Vspace$ be an open set. A lower semi-continuous function $W\colon A\rightarrow \mathbb{R}$ is a {\em proximal supersolution} of \eqref{HJB} if
$$
-[\xi_t +\min_{v\in F(t,x)} \langle v, \xi_{\varphi}\rangle ] \geq 0,
$$
for every $(t,\varphi)\in A$ and $(\xi_t,\xi_\varphi)\in \partial_{P}W(t,\varphi)$.

An upper semi-continuous function $W\colon A\rightarrow \mathbb{R}$ is a {\em proximal subsolution} of \eqref{HJB} if
$$
-[\xi_t +\min_{v\in F(t,x)} \langle v, \xi_{\varphi}\rangle ] \leq 0,
$$
for every $(t,\varphi)\in A$ and $(\xi_t,\xi_\varphi)\in \partial^P W(t,\varphi)$.

A  continuous function $W\colon A\rightarrow \mathbb{R}$ is a {\em proximal solution} if it is both proximal supersolution and subsolution.
\end{definition}

Let us now recall some results of weak and strong invariance, and Hamiltonian characterizations (see, e.g. T. Donchev \cite{Donchev}).

\begin{theorem}[Characterization of weak invariance \cite{Donchev}]
\label{w.i.-char}
Assume that $\Gamma$ is almost upper semi-continuous, with compact and convex values. Suppose that there exists $c\in L^1(0,\infty)$ such that $|\Gamma(t,x)|\leq c(t)$ a.e. $t\in[0,\infty)$. Then the pair $(\Gamma, G)$ is weakly invariant if and only if the following two conditions hold true:
\begin{itemize}
\item[i)] the set-valued map $t\leadsto D(t)$ is left absolutely continuous;
\item[ii)] there exists a full measure set $I$ in $[0,\infty)$ such that, for every $(t,x)\in G$ with $t\in I$, one has
$$\xi_0+\min_{v\in \Gamma(t,x)} \langle v, \xi \rangle \leq 0\quad \text{for all }
\,(\xi_0,\xi)\in N^P_G(t,x).$$
\end{itemize}
\end{theorem}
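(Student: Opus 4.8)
The plan is to prove the stated equivalence by two separate implications, following the classical viability (Nagumo) scheme but in the proximal formulation forced by the infinite-dimensionality of $X$, where the contingent-cone characterization of \cite{Clarke1995QualitativePO, AubCelBook} is no longer available. The necessity direction (weak invariance $\Rightarrow$ (i), (ii)) is essentially an infinitesimal reading of existing viable trajectories, while the sufficiency direction (i), (ii) $\Rightarrow$ weak invariance requires constructing one.

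For necessity, assume $(\Gamma,G)$ is weakly invariant. To get (i), I would use that any viable trajectory $x(\cdot)$ satisfies $|x(s)-x(t)|\le\int_t^s c(\tau)\,d\tau$ by the bound $|\Gamma(t,x)|\le c(t)$. Hence, starting from any $y\in D(t_i)\cap B$, the viable trajectory issued from $(t_i,y)$ reaches a point of $D(s_i)$ at distance at most $\int_{t_i}^{s_i}c$, so $\operatorname{ex}(D(t_i)\cap B,D(s_i))\le\int_{t_i}^{s_i}c(\tau)\,d\tau$; summing over disjoint intervals and invoking the absolute continuity of $t\mapsto\int_0^t c$ (valid since $c\in L^1$) yields the left absolute continuity of $D$. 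For (ii), fix $(t,x)\in G$ and $(\xi_0,\xi)\in N^P_G(t,x)$, take a viable trajectory with $x(t)=x$, and insert the points $(s,x(s))\in G$ into the proximal normal inequality to obtain
\[
\xi_0(s-t)+\langle \xi, x(s)-x\rangle \le \lambda\big(|s-t|^2+|x(s)-x|^2\big).
\]
Dividing by $s-t>0$ and letting $s\downarrow t$ along a common Lebesgue point $t\in I$ of $c$ and $\dot x$, the right-hand side is $o(1)$ while $(x(s)-x)/(s-t)\to\dot x(t)\in\Gamma(t,x)$, giving $\xi_0+\langle\xi,\dot x(t)\rangle\le 0$ and therefore $\xi_0+\min_{v\in\Gamma(t,x)}\langle v,\xi\rangle\le 0$.

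The sufficiency direction is the substantial part. Assuming (i) and (ii), I would construct a viable trajectory from a given $x_0\in D(t_0)$ by a proximal-aiming/Euler scheme: on a fine partition of $[t_0,T]$, from the current point one selects a velocity $v\in\Gamma$ which, through (ii) applied to the proximal normal realized by a nearest point of the tube $G$, decreases the squared distance $t\mapsto d^2_{D(t)}(x(t))$, while the left absolute continuity (i) bounds the error coming from the time-variation of the target $D$. The decisive analytic step is the passage to the limit of these approximations. Since $X$ is infinite-dimensional I cannot use local compactness; instead I would exploit the integrable bound $|\Gamma|\le c(t)$ to get equicontinuity of the approximate trajectories and uniform integrability of their derivatives, extract a weakly-$L^1$ convergent subsequence of derivatives by Dunford--Pettis, and then apply Mazur's lemma together with the convexity and almost upper semicontinuity of $\Gamma$ to identify the limit as a genuine solution of $\dot x\in\Gamma(t,x)$ --- precisely the closedness mechanism already used in the proof of Theorem \ref{compactS}. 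Finally, the left absolute continuity of $D$ guarantees that the limiting trajectory remains in $D(t)$.

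The hard part will be this sufficiency half, and within it the simultaneous bookkeeping of the distance-decrease estimate against the motion of $D(t)$ and the identification of the limit. The proximal (rather than contingent) normal condition is exactly what makes the distance-decrease argument quantitative enough to survive the discretization, and the $L^1$ velocity bound is what rescues compactness of the approximating family in the absence of a compact semigroup; combining these two ingredients cleanly, as in \cite{Donchev}, is the crux. The continuous-case equivalence with viscosity solutions \cite{ClarkeLedyaev1994, Clarke1998} can then be read off once the proximal characterization is in place.
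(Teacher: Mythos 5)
This theorem is not proved in the paper: it is quoted verbatim from Donchev \cite{Donchev} and used as a black box (the paper explicitly says it is ``recalling'' results of weak and strong invariance). So there is no in-paper proof to compare against; the relevant benchmark is the cited source, and your outline does reconstruct its strategy correctly --- necessity by differentiating the proximal normal inequality along viable arcs, sufficiency by a proximal-aiming Euler scheme whose limit is identified via the $L^1$ velocity bound, Dunford--Pettis, Mazur's lemma, and the convexity and almost upper semicontinuity of $\Gamma$.

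That said, what you have is a plan rather than a proof, and three of the glossed steps are genuinely delicate. First, in the necessity of (ii) you let $s\downarrow t$ ``along a common Lebesgue point $t\in I$ of $c$ and $\dot x$'': but the full-measure set $I$ in the statement must be independent of $x$, while the Lebesgue set of $\dot x$ depends on the trajectory and hence on the base point. The standard repair is to take $I$ from the almost upper semicontinuity of $\Gamma$, note that the difference quotients $(x(s)-x)/(s-t)$ lie in $\overline{\operatorname{co}}\,\Gamma(\mathcal{B}_\delta(t)\cap I,\mathcal{B}_\varepsilon(x))$ and are bounded, extract a \emph{weak} limit, and use that $\Gamma(t,x)$ is convex and closed (hence weakly closed) to place that limit in $\Gamma(t,x)$; one does not get $\dot x(t)$ itself. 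Second, Definition \ref{def:weak} only provides a viable solution on some $[t_0,T)$ with $T>t_0$, so your estimate $\operatorname{ex}(D(t_i)\cap B,D(s_i))\le\int_{t_i}^{s_i}c$ presupposes an extension argument (maximal viable solutions do not blow up because $|\Gamma|\le c\in L^1$, but this must be said). Third, in the sufficiency half the aiming step requires a nearest point of the tube $G$, which need not exist for a closed set in an infinite-dimensional Hilbert space; one must work with approximate (almost-nearest) projections and track the resulting $\varepsilon$-errors through the discrete distance-decrease estimate alongside the error from the left absolute continuity of $D$. All three issues are handled in \cite{Donchev}, but as written your argument does not close them.
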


\begin{theorem}[Characterization of strong invariance \cite{Donchev}]
\label{s.i.-char}
Assume that $\Gamma$ is almost lower semi-continuous and such that $x\leadsto \Gamma(t,x)$ is Lipschitz continuous, a.e. $t\in[0,\infty)$.  Then the pair $(\Gamma, G)$ is strongly invariant if and only if  there exists a full measure set $I$ in $[0,\infty)$ such that, for every $(t,x)\in G$ with $t\in I$, one has
$$\xi_0+\max_{v\in \Gamma(t,x)}  \langle v , \xi \rangle\leq 0\quad \text{for all} \;\; (\xi_0,\xi)\in N^P_G(t,x).$$
\end{theorem}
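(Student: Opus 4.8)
The plan is to prove the two implications of the equivalence separately, viewing the graph $G\subset[0,\infty)\times X$ as a closed subset of the product Hilbert space $\cR\times X$ and working with the augmented dynamics $(\dot t,\dot x)=(1,v)$, $v\in\Gamma(t,x)$, whose admissible velocities at $(t,x)\in G$ form the set $\{1\}\times\Gamma(t,x)$. The upper Hamiltonian condition then reads exactly as a sign condition on $\langle(\xi_0,\xi),(1,v)\rangle$ over proximal normals to $G$.

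First I would establish \emph{necessity}: strong invariance implies the Hamiltonian inequality. Fix $(t,x)\in G$ with $t$ in the full-measure set $I$ of instants where the almost lower semi-continuity and Lipschitz hypotheses hold, and take $(\xi_0,\xi)\in N^P_G(t,x)$; by definition there is $\sigma\ge 0$ with $\xi_0(s-t)+\langle\xi,y-x\rangle\le\sigma(|s-t|^2+\|y-x\|^2)$ for all $(s,y)\in G$. Given an arbitrary $v\in\Gamma(t,x)$, I would use the almost lower semi-continuity of $\Gamma$ together with a measurable-selection argument to produce a solution $x(\cdot)$ of $\dot x\in\Gamma(\cdot,x(\cdot))$ on $[t,t+h]$ with $x(t)=x$ and right velocity $v$ at $t$, i.e. $\frac1h\int_t^{t+h}\dot x\,\to v$ as $h\downarrow 0$. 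Strong invariance forces $(s,x(s))\in G$, so inserting $(s,y)=(s,x(s))$ into the proximal inequality, dividing by $s-t>0$ and letting $s\downarrow t$ yields $\xi_0+\langle\xi,v\rangle\le 0$. Since $v\in\Gamma(t,x)$ is arbitrary, taking the supremum gives $\xi_0+\max_{v\in\Gamma(t,x)}\langle v,\xi\rangle\le 0$.

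Conversely, for \emph{sufficiency} I would derive strong invariance from the Hamiltonian inequality by a Grönwall estimate on the squared distance to the graph. Let $x(\cdot)$ solve the Cauchy problem with $(t_0,x_0)\in G$ and set $\rho(t):=\dist\big((t,x(t)),G\big)^2$. Since $x$ is absolutely continuous, $\rho$ is locally Lipschitz, hence differentiable a.e.; at a point $t$ of differentiability I would select a (near-)projection $(\tau,y)\in G$ of $(t,x(t))$, so that $(t-\tau,\,x(t)-y)\in N^P_G(\tau,y)$. Writing $v(t):=\dot x(t)\in\Gamma(t,x(t))$ and using the Lipschitz inclusion $\Gamma(t,x(t))\subset\Gamma(\tau,y)+L(\|x(t)-y\|+|t-\tau|)\,\mathbb{B}$, one compares $v(t)$ with some $w\in\Gamma(\tau,y)$ and invokes the Hamiltonian inequality at $(\tau,y)$ to bound the envelope derivative $\dot\rho(t)=2[(t-\tau)+\langle x(t)-y,v(t)\rangle]$ by $C\,\rho(t)$ a.e. Since $\rho(t_0)=0$, Grönwall's lemma forces $\rho\equiv 0$, that is $(t,x(t))\in G$ for all $t\ge t_0$, which is strong invariance.

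The delicate points are measure-theoretic and infinite-dimensional. In the necessity part, building a trajectory whose prescribed right velocity equals an \emph{arbitrary} $v\in\Gamma(t,x)$ is where almost lower semi-continuity is indispensable, and restricting $t$ to the full-measure set $I$ is what accommodates the merely measurable time dependence. In the sufficiency part, the principal obstacle is that in an infinite-dimensional $X$ the metric projection onto the closed set $G$ need not exist, so the nearest point $(\tau,y)$ must be replaced by approximate projections realizing $\dist((t,x(t)),G)^2$ up to $o(\rho(t))$; one must then verify that the associated direction is genuinely a proximal normal and that the resulting error terms stay controlled in the differential inequality. Finally, handling the time coordinate $\tau$ of the projection---showing $|t-\tau|$ is comparable to the spatial discrepancy $\|x(t)-y\|$ so it does not spoil the Grönwall bound---together with the a.e.-in-$t$ validity of the Hamiltonian inequality, is the technical heart of the argument.
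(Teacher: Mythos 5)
The paper does not actually prove this statement: Theorem \ref{s.i.-char} is recalled verbatim from Donchev \cite{Donchev} and used as a black box, so there is no internal proof to compare yours against; your attempt has to be judged against the argument in the cited literature. Your overall architecture --- necessity by constructing, for each $v\in\Gamma(t,x)$, a trajectory with right derivative $v$ and inserting it into the proximal inequality; sufficiency by a Gr\"onwall estimate on the squared distance of $(t,x(t))$ to $G$ via (approximate) projections --- is indeed the standard proximal-analysis route and matches the structure of the proofs in the references.

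There is, however, a concrete gap in the sufficiency half as written. The comparison inclusion you invoke, $\Gamma(t,x(t))\subset\Gamma(\tau,y)+L\bigl(\|x(t)-y\|+|t-\tau|\bigr)\mathbb{B}$, presumes Lipschitz dependence of $\Gamma$ on the \emph{time} variable, which is not among the hypotheses: the theorem only assumes that $x\leadsto\Gamma(t,x)$ is Lipschitz for a.e.\ fixed $t$, together with almost lower semi-continuity in $(t,x)$ (continuity off sets of small measure, \`a la Scorza Dragoni). Since the near-projection $(\tau,y)$ of $(t,x(t))$ onto $G$ generally has $\tau\neq t$, you cannot pass from $\dot x(t)\in\Gamma(t,x(t))$ to a nearby element of $\Gamma(\tau,y)$ by a Lipschitz estimate in the time slot; this is precisely where the proof in \cite{Donchev} must work harder --- restricting to the compact time sets where $\Gamma$ is jointly continuous, using that the Hamiltonian inequality is only required for $t$ in a full-measure set, and controlling the time offset $|t-\tau|$ of the projection through the regularity of the tube $t\leadsto D(t)$ rather than through a Lipschitz bound on $\Gamma$ in $t$. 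You correctly identify this as the technical heart, but the specific step you propose would fail under the stated assumptions. The necessity half is sound in outline: producing a solution with prescribed right velocity $v$ at a point $t$ of the Scorza Dragoni/Lebesgue set is a known consequence of almost lower semi-continuity plus Lipschitz continuity in $x$, and restricting to the full-measure set $I$ is exactly what legitimizes the limit $s\downarrow t$ and the vanishing of the quadratic error term.
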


\subsection{Invariance properties and the HJB equation}

Now, we will demonstrate the invariance properties of an autonomous system involving the multifunction
$F$ (see \eqref{Fdef}). These results will show that the value function $V$ defined in \eqref{vf}  is the unique lower semi-continuous solution of the HJB equation. This is done by using the Dynamical Programming Principle given in Theorem \ref{DPP}. 

Other properties of $F$ can be derived from the assumptions ${\bf (H)},$ in fact, one has that

1. for all $t\in [0,T]$ fixed, $F(t,\cdot)$ is continuous; and 

2. $F(t,\varphi)$ is $\mathcal{L}\otimes\mathcal{B}$- measurable. 

\noindent Consequently, from a Scorza Dragoni type result \cite{ArtsteinPrikry1987}, there exists a full measure interval $I\subset [0,T]$ such that $F(t,\varphi)$ is continuous in $I\times\Vspace.$ Observe that these properties guarantee that the multifunctions $$\{1\}\times F(t,\varphi) \times \{0\} \quad \text{and} \quad \{1\}\times -F(T-t,\varphi) \times \{0\}$$  satisfy the regularity hypotheses of Theorems \ref{w.i.-char} and \ref{s.i.-char}, respectively. 

Let $V$ be the value function of problem $(P)_{s,\varphi},$ and  
let us define the set
$$P(t):=\left\{(\varphi,\alpha)\in \Vspace\times \mathbb{R}: V(t,\varphi)\leq \alpha \right\},$$
 and consider the associated set-valued function $t\leadsto P(t).$
For each $t\in[0,T]$, such a set $P(t)$ can be viewed as the epigraph of the function $\Vspace \ni \varphi\mapsto V(t,\varphi),$ 
 and $\mathrm{Gr}\,P$ coincides with $$\mathrm{epi}\,V:=\left\{(t,\varphi,\alpha)\in \Vspace\times \mathbb{R}: V(t,\varphi)\leq \alpha \right\} ,$$ the epigraph of the value function, which is a closed set due to the lower semi-continuity of $V.$

\begin{proposition}
If hypotheses {\bf (H)}, ${\bf (H_{\mu})}$ (or ${\bf (K)}$) and $\mathrm{\textbf{(C)}}$  hold true, then the set-valued map $$t\leadsto P(t) $$ is absolutely continuous. \end{proposition}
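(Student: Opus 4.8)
The plan is to verify the two excess estimates defining absolute continuity \eqref{LAC} directly, exploiting that $P(t)=\operatorname{epi} V(t,\cdot)$ is closed (the lower semi-continuity of $V$ from Theorem~\ref{charV}) and that $V$ varies Lipschitz-continuously in time along trajectories, as encoded by the Dynamic Programming Principle (Theorem~\ref{DPP}) together with the trajectory estimates of Lemma~\ref{x.ineq}. Fix $T$, a bounded set $B\subset\mathcal{B}_R(0)$ and $\varepsilon>0$. Throughout, the exponential factors $e^{c(\cdot)}$ and $e^{k(\cdot)}$ in Lemma~\ref{x.ineq} are bounded by $e^{cT}$ and $e^{kT}$, and since estimates $1$ and $4$ of that lemma are uniform over admissible controls, all the constants below can be absorbed into a single $L=L(R,T)>0$. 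Given a sequence $\{(t_i,s_i)\}$ of pairwise disjoint open subintervals of $[0,T]$ (so $t_i<s_i$), it then suffices to show $\operatorname{ex}(P(t_i)\cap B,P(s_i))\le L(s_i-t_i)$ and $\operatorname{ex}(P(s_i)\cap B,P(t_i))\le L(s_i-t_i)$ for each $i$: summing gives $\sum_i\operatorname{ex}_B(P(t_i),P(s_i))\le L\sum_i(s_i-t_i)<L\delta$, and one takes $\delta=\varepsilon/L$.

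For the \emph{forward} excess, take $(\varphi,\alpha)\in P(t_i)\cap B$, so $V(t_i,\varphi)\le\alpha$ and $\|\varphi\|_{\Vspace}\le R$. Let $\bar u$ be an optimal control for $(P)_{t_i,\varphi}$ (which exists by Theorem~\ref{charV}) and set $\psi:=x_{\varphi,\bar u}(s_i)$. Combining \eqref{DPP_1} with the optimality of $\bar u$ forces $V(s_i,\psi)=V(t_i,\varphi)\le\alpha$, so $(\psi,\alpha)\in P(s_i)$, while estimate $4$ of Lemma~\ref{x.ineq} gives $\|\psi-\varphi\|_{\Vspace}\le L(s_i-t_i)$. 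Hence $\operatorname{dist}\big((\varphi,\alpha),P(s_i)\big)\le\|\psi-\varphi\|_{\Vspace}\le L(s_i-t_i)$, and taking the supremum over $P(t_i)\cap B$ bounds the forward excess. (One may avoid invoking existence of minimizers and instead use a near-optimal control via \eqref{DPP_2}, losing an arbitrarily small $\eta>0$ that is then sent to $0$.)

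For the \emph{backward} excess — the delicate point, since $g$ is merely lower semi-continuous and no Lipschitz comparison of terminal costs is available — take $(\psi,\beta)\in P(s_i)\cap B$ and fix any admissible control $u\in\Uad[t_i,s_i]$. Because $f$ is Lipschitz in the state, the flow map $\Phi:\Vspace\to\Vspace$, $\Phi(\zeta):=x_{\zeta,u}(s_i)$ (solution of \eqref{dif.eq} issuing from $\zeta$ at time $t_i$ under $u$), is a bi-Lipschitz homeomorphism: estimate $2$ of Lemma~\ref{x.ineq} bounds $\operatorname{Lip}(\Phi)\le e^{k(s_i-t_i)}$, while uniqueness for the time-reversed equation makes $\Phi^{-1}$ well defined with $\operatorname{Lip}(\Phi^{-1})\le e^{k(s_i-t_i)}$. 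Setting $\varphi:=\Phi^{-1}(\psi)$, the trajectory from $\varphi$ reaches $\psi$ exactly at time $s_i$, so \eqref{DPP_1} yields $V(t_i,\varphi)\le V(s_i,\psi)\le\beta$ and thus $(\varphi,\beta)\in P(t_i)$; the crucial feature is that the terminal data stays fixed, so no difference of $g$-values ever enters. Finally $\|\varphi-\psi\|_{\Vspace}=\|\Phi^{-1}(\psi)-\Phi^{-1}(\Phi(\psi))\|_{\Vspace}\le e^{k(s_i-t_i)}\|\psi-\Phi(\psi)\|_{\Vspace}$, and estimate $4$ of Lemma~\ref{x.ineq} applied at $\psi$ bounds $\|\psi-\Phi(\psi)\|_{\Vspace}\le L(s_i-t_i)$, giving $\operatorname{dist}\big((\psi,\beta),P(t_i)\big)\le L(s_i-t_i)$. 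I expect this invertibility-of-the-flow argument, precisely because it sidesteps the regularity of $g$, to be the main obstacle; the remainder is uniform Grönwall bookkeeping through Lemma~\ref{x.ineq}.
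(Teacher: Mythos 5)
Your proof is correct and follows essentially the same route as the paper's: the forward excess is handled by pushing a point of $P(t_i)$ along an optimal trajectory (on which $V$ is constant by the Dynamic Programming Principle), the backward excess by solving the dynamics backward in time to produce a trajectory arriving at the given point and invoking the monotonicity $V(t_i,x(t_i))\le V(s_i,x(s_i))$, with both displacements controlled by the Gr\"onwall estimates of Lemma~\ref{x.ineq}. Your explicit bi-Lipschitz flow-map construction and the summation over disjoint intervals are just more detailed renderings of steps the paper leaves implicit.
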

\begin{proof} 
Fix any $0\leq s \leq \tau \leq T,$ let $B\times I\subset \Vspace \times \cR$ be a bounded set and $(\varphi,\alpha)\in P(s)\cap  B\times I,$ consider $\bar{x}\in S[s,T](\varphi)$ the optimal solution, then $V(t,\bar{x}(t))=\int_{\Omega}g(\bar{x}(T,\omega),\omega)d\mu(\omega)$ for all $t\in[s,T].$ This implies that $V(\tau,\bar{x}(\tau))\leq \alpha,$ and $(\bar{x}(\tau),\alpha)\in P(\tau).$ Thus, from Lemma \ref{x.ineq} we have that 
\benl
\begin{aligned}
    \operatorname{dist}((\varphi,\alpha),P(\tau))&\leq \|\varphi-\bar{x}(\tau)\|_{\Vspace}\\ 
    &\leq C\left(\mu(\Omega)^{1/2}+\operatorname{diam}(B)  \right)|\tau-s |,
\end{aligned}
\eenl
which leads to$$\operatorname{ex}\left(P\left(\tau\right)\cap  B\times I, P\left(s\right)\right)\leq C\left(\mu(\Omega)^{1/2}+\operatorname{diam}(B)  \right)|\tau-s |.$$
On the other hand, let $(\varphi,\alpha)\in P(\tau)\cap B\times I $ and consider $x\in S_{[s,\tau]}$ such that $x(\tau)=\varphi.$ Then $V(s,x(s))\leq V(\tau,\varphi) \leq \alpha,$ and $(x(s),\alpha)\in P(s).$ Thus 
\benl
\begin{aligned}
    \operatorname{dist}((\varphi,\alpha),P(\tau))
    &\leq \|x(\tau)-x(s)\|_{\Vspace}
\\ 
&\leq C\left(\mu(\Omega)^{1/2}+\operatorname{diam}(B)  \right)|\tau-s |,
\end{aligned}
\eenl
and
$$\operatorname{ex}\left(P\left(s\right)\cap  B\times I, P\left(\tau\right)\right)\leq C\left(\mu(\Omega)^{1/2}+\operatorname{diam}(B)  \right)|\tau-s |.$$


\end{proof}


\begin{proposition}[Invariance results on $V$]
\label{dpp-inv}
Let us assume {\bf (H)}, ${\bf (H_{\mu})}$ (or ${\bf (K)}$) and the convexity assumption  $\mathrm{\textbf{(C)}}.$   Then:
\begin{itemize}
\item[$i)$] the pair $(\{1\}\times F\times\{0\}, {\rm Gr}\, P)$   
is weakly invariant;
\item[$ii)$] the pair $(\{1\}\times\tilde F \times\{0\}, {\rm Gr}\, \tilde P)$ 
is strongly invariant, where $\tilde F(t, \varphi):=-F(T-t,\varphi)$ and $\tilde P(t):=P(T-t)$ for every $(t,\varphi)\in [0,T]\times \Vspace$.
\end{itemize}
\end{proposition}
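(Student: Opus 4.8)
The plan is to establish both invariance properties \emph{directly} from the Dynamic Programming Principle (Theorem \ref{DPP}) together with the existence of minimizers (Theorem \ref{charV}), rather than by verifying the proximal-normal conditions of the characterizations in Theorems \ref{w.i.-char}--\ref{s.i.-char}; those characterizations will instead be applied afterwards to read off the HJB super- and subsolution inequalities. Throughout I would unwind Definitions \ref{def:weak} and \ref{def:strong} for the augmented state $(t,\varphi,\alpha)\in[0,T]\times\Vspace\times\cR$: under the dynamics $\{1\}\times F\times\{0\}$ the time coordinate satisfies $\dot t=1$, the $\Vspace$-component solves the inclusion $\dot\varphi\in F(t,\varphi)$, and the cost coordinate is frozen, $\dot\alpha=0$. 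By Fillipov's Lemma (Lemma \ref{Filippov}, Corollary \ref{selection}) such inclusion trajectories coincide with trajectories of the control system \eqref{dif.eq}, so I may freely pass between the two descriptions.

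For part $i)$ I would first record the principle of optimality: if $\bar x\in S_{[t_0,T]}(\varphi_0)$ is an optimal trajectory for $(P)_{t_0,\varphi_0}$, then $V(t,\bar x(t))=V(t_0,\varphi_0)$ for every $t\in[t_0,T]$. Indeed, Theorem \ref{DPP}$\,i)$ gives $V(t_0,\varphi_0)\le V(t,\bar x(t))$, while comparing $V(t,\bar x(t))$ with the cost of the tail control $\bar u|_{[t,T]}$ starting from $\bar x(t)$ yields the reverse inequality. Now fix $(t_0,\varphi_0,\alpha_0)\in\mathrm{Gr}\,P$, i.e. $V(t_0,\varphi_0)\le\alpha_0$; Theorem \ref{charV} furnishes the required optimal $\bar x$, and the curve $t\mapsto(t,\bar x(t),\alpha_0)$ is then a trajectory of $\{1\}\times F\times\{0\}$ along which $V(t,\bar x(t))=V(t_0,\varphi_0)\le\alpha_0$. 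Hence it remains in $\mathrm{Gr}\,P$, which is exactly the single admissible trajectory demanded by weak invariance.

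For part $ii)$ the driving mechanism is the monotonicity of $V$ along \emph{arbitrary} trajectories, which is precisely Theorem \ref{DPP}$\,i)$: $V(s,y(s))\le V(s',y(s'))$ whenever $s\le s'$ and $\dot y\in F(\cdot,y)$. I would reduce the reversed dynamics to this case through the change of variable $y(s):=\varphi(T-s)$, which turns any solution of $\dot\varphi(t)\in\tilde F(t,\varphi)=-F(T-t,\varphi)$ into a forward solution of $\dot y(s)\in F(s,y(s))$. Fixing $(t_0,\varphi_0,\alpha_0)\in\mathrm{Gr}\,\tilde P$, so $V(T-t_0,\varphi_0)\le\alpha_0$, and letting $\varphi$ be \emph{any} solution of the reversed inclusion with $\varphi(t_0)=\varphi_0$, I set $s:=T-t\le s_0:=T-t_0$ for $t\ge t_0$. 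Monotonicity then gives $V(s,y(s))\le V(s_0,y(s_0))=V(T-t_0,\varphi_0)\le\alpha_0$; since $\varphi(t)=y(s)$ and $\tilde P(t)=P(T-t)$, this means $(t,\varphi(t),\alpha_0)\in\mathrm{Gr}\,\tilde P$ for \emph{every} such trajectory, establishing strong invariance.

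The genuinely nontrivial input is the existence of the optimal trajectory used in part $i)$, supplied by Theorem \ref{charV} and ultimately by the compactness result Theorem \ref{compactS}: along a merely near-optimal trajectory $V$ can strictly exceed $\alpha_0$ (it is only nondecreasing), so an exact optimizer is indispensable to keep the augmented curve inside $\mathrm{epi}\,V$. In part $ii)$ the only points requiring care are the bookkeeping of the time reversal $s=T-t$ and invoking Theorem \ref{DPP}$\,i)$ in the correct monotone direction. The convexity assumption $\mathrm{\textbf{(C)}}$ and the regularity of $F$ enter only to guarantee that the hypotheses under which $V$ and the solution sets $S_{[s,T]}(\varphi)$ were constructed remain in force, so that both the $\tilde F$-trajectories and the value function are the very objects produced in the previous sections.
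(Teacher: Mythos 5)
Your proposal is correct and follows essentially the same route as the paper: part $i)$ by running the exact optimal trajectory supplied by Theorem \ref{charV} and using the Dynamic Programming Principle to see that $V$ is constant along it, and part $ii)$ by the time reversal $y(s)=\tilde x(T-s)$ converting any $\tilde F$-trajectory into an $F$-trajectory and then invoking the monotonicity statement of Theorem \ref{DPP}$\,i)$. The only difference is that you spell out two steps the paper leaves implicit --- the tail-control argument showing $V(t,\bar x(t))=V(t_0,\varphi_0)$ and the explicit appeal to existence of minimizers --- which is a sound elaboration rather than a different proof.
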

\begin{proof} $i).$ Take any $(t_0, \varphi_0, \alpha_0)\in [0,T]\times \Vspace\times\mathbb{R}$ such that  $(\varphi_0, \alpha_0)\in P(t_0)$. Take the optimal pair $(\bar x, \bar u)$ of $(P)_{(t_0, \varphi_0)}$.
It follows from the Dynamic Programming Principle that $V(t, \bar x(t))=V(t_0, \varphi_0)\leq \alpha_0$ for every $t\in [t_0, T]$, implying that $(\bar x(t), \alpha_0)\in P(t)$ for all $t\in [t_0, T]$.

$ii).$ Take any $(t_0, \varphi_0, \alpha_0)\in [0,T]\times \Vspace\times\mathbb{R}$ such that $(\varphi_0, \alpha_0)\in \tilde P(t_0)$. Consider any solution $\tilde{x}(\cdot)$ of the problem
\[
\left\{ \begin{array}{ll}
\dot{x}(t)\in \tilde F(t, x(t)), \qquad \mathrm{a.e.}\; t\in [t_0, T], \\
x(t_0)=\varphi_0.
\end{array} \right . 
\]
One needs to show that
$$(\tilde x(t), \alpha_0)\in \tilde P(t)= \left\{(\varphi,\alpha)\in \Vspace\times \mathbb{R}:\quad \tilde{V}(t,\varphi)\leq \alpha \right\},$$
where $\tilde V(t,x):= V(T-t,x)$.
Let us define the curve $y(s):=\tilde{x}(T-s)$, $s\in [0, T-t_0]$ (namely, $s=T-t$), and observe that
$$\frac{d}{ds}y(s)=\frac{d}{ds}\tilde{x}(T-s)\in -\tilde{F}(T-s, \tilde{x}(T-s))=F(s,y(s)).$$
In view of the Dynamic Programming Principle, one has that
$$V(s,y(s))\leq V(T-t_0, y(T-t_0))=\tilde{V}(t_0,\tilde{x}(t_0))\leq \alpha_0,\qquad \forall\; s\in [0,T-t_0].$$
Since $s=T-t$, the left hand side of the previous inequality reads as
$$V(s,y(s))=V(s, \tilde x(T-s))=V(T-t, \tilde{x}(t))=\tilde{V}(t, \tilde{x}(t)),$$
providing the inequality
$$\tilde{V}(t, \tilde{x}(t))\leq \tilde{V}(t_0,\tilde{x}(t_0))\leq \alpha_0,\qquad \forall\; t\in [t_0,T],$$
which gives the desired condition.
\end{proof}

Let us recall the definition of the functional $\mathcal{J}$ given in \eqref{calJ}.

\begin{theorem}[Comparison results for $V$]
\label{comparison}
Take a lower semi-continuous function $\theta:[0,T]\times \Vspace\rightarrow (-\infty, \infty]$  such that 
$$ \theta(T, \varphi)=\mathcal{J}(\varphi) \qquad \text{for all }\; \varphi \in \Vspace.$$
Define
$$P_{\theta}(t):=\left\{(\varphi, \alpha)\in \Vspace\times \mathbb{R}:\quad \theta(t,\varphi)\leq \alpha  \right\}.$$
Then
\begin{itemize}
\item[$i)$] if the pair $(\{1\}\times F\times \{ 0\}, \mathrm{Gr}\, P_{\theta})$ is weakly invariant, then $V(t,\varphi)\leq \theta(t,\varphi)$ for all $(t,\varphi)\in [0,T)\times \Vspace$.
\item[$ii)$] if the pair $(\{1\}\times  \tilde F \times\{0\},\mathrm{Gr}\, \tilde{P}_\theta)$ is strongly invariant, then $V(t,\varphi)\geq \theta(t,\varphi)$ for all $(t,\varphi)\in [0,T]\times \Vspace$ where $\tilde F(t, \varphi):=-F(T-t,\varphi)$ and $\tilde{P}_\theta(t):=P_\theta(T-t)$ for every $(t,\varphi)\in [0,T]\times \Vspace.$
\end{itemize}
\end{theorem}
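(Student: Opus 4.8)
The plan is to read each inequality directly off the corresponding invariance property, following the same time-reversal device already used in Proposition \ref{dpp-inv}. For part $i)$, fix $(t_0,\varphi_0)\in[0,T)\times\Vspace$ and assume $\theta(t_0,\varphi_0)<\infty$ (otherwise there is nothing to prove). I would set $\alpha_0:=\theta(t_0,\varphi_0)$, so that $(\varphi_0,\alpha_0)\in P_\theta(t_0)$. Weak invariance of $(\{1\}\times F\times\{0\},\mathrm{Gr}\,P_\theta)$ then produces a trajectory $x(\cdot)$ of $\dot x\in F(t,x)$ with $x(t_0)=\varphi_0$ that remains in the graph, i.e. $\theta(t,x(t))\le\alpha_0$. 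The first task is to carry this viable curve up to the horizon: a standard maximal-extension argument, based on the linear growth of $F$ inherited from \eqref{fbounded} (so no blow-up, cf. Lemma \ref{x.ineq}) together with the closedness of $\mathrm{Gr}\,P_\theta$ (which holds because $\theta$ is l.s.c.), yields a viable trajectory defined on all of $[t_0,T)$, and the uniform time-continuity estimate in Lemma \ref{x.ineq} provides a continuous extension to $t=T$.

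To finish part $i)$, I would note that by Fillipov's Lemma (Corollary \ref{selection}) the relation $\dot x\in F(t,x)$ a.e. supplies an admissible control, so $x\in S_{[t_0,T]}(\varphi_0)$ and hence $V(t_0,\varphi_0)\le \mathcal{J}(x(T))$. Passing to the limit $t\to T^-$ in $\theta(t,x(t))\le\alpha_0$ and invoking the lower semicontinuity of $\theta$ at $(T,x(T))$ gives $\theta(T,x(T))\le \liminf_{t\to T^-}\theta(t,x(t))\le\alpha_0$; since $\theta(T,\cdot)=\mathcal{J}$, this reads $\mathcal{J}(x(T))\le\alpha_0$. Chaining the two, $V(t_0,\varphi_0)\le \mathcal{J}(x(T))\le \alpha_0=\theta(t_0,\varphi_0)$.

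For part $ii)$ I would exploit the same time reversal as in Proposition \ref{dpp-inv}$ii)$, observing first that it suffices to show $\mathcal{J}(x(T))\ge\theta(\sigma,\varphi)$ for every $x\in S_{[\sigma,T]}(\varphi)$, since the infimum over such $x$ then yields $V(\sigma,\varphi)\ge\theta(\sigma,\varphi)$. Fix $(\sigma,\varphi)\in[0,T]\times\Vspace$ and such an $x$, and define $y(s):=x(T-s)$ for $s\in[0,T-\sigma]$, which solves $\dot y(s)\in -F(T-s,y(s))=\tilde F(s,y(s))$ with $y(0)=x(T)$. Setting $\alpha_0:=\theta(T,x(T))=\mathcal{J}(x(T))$ and using $\tilde P_\theta(0)=P_\theta(T)$, one has $(y(0),\alpha_0)\in\tilde P_\theta(0)$.

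Strong invariance of $(\{1\}\times\tilde F\times\{0\},\mathrm{Gr}\,\tilde P_\theta)$ then forces $(y(s),\alpha_0)\in\tilde P_\theta(s)$ for every $s\in[0,T-\sigma]$. Evaluating at $s=T-\sigma$ and recalling $\tilde P_\theta(T-\sigma)=P_\theta(\sigma)$ together with $y(T-\sigma)=x(\sigma)=\varphi$, I obtain $\theta(\sigma,\varphi)\le\alpha_0=\mathcal{J}(x(T))$; as $x$ was arbitrary, $\theta(\sigma,\varphi)\le V(\sigma,\varphi)$. The only genuinely delicate point is in part $i)$: guaranteeing that the weakly invariant trajectory reaches the horizon and that the terminal inequality survives the limit, which is exactly where the linear growth of $F$, the closedness of the graph, and the lower semicontinuity of $\theta$ must be combined. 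Part $ii)$, by contrast, is an essentially algebraic consequence of strong invariance once the time-reversed curve $y$ is in place.
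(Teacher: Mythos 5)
Your proof is correct and follows essentially the same route as the paper: part $i)$ reads the inequality off weak invariance with $\alpha_0=\theta(t_0,\varphi_0)$ and passes to the limit $t\to T^-$ using the lower semicontinuity of $\theta$ together with $\theta(T,\cdot)=\mathcal{J}$, while part $ii)$ uses the same time-reversal device as Proposition \ref{dpp-inv}. The only differences are cosmetic: you make explicit the extension of the viable trajectory up to the horizon (a point the paper leaves implicit in its invocation of weak invariance), and in part $ii)$ you reverse a forward trajectory of $F$ rather than un-reversing a solution of $\tilde F$, which slightly streamlines the bookkeeping.
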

\begin{proof} $i)$
Since the pair $(\{1\}\times F\times \{ 0\},\mathrm{Gr}P_{\theta})$ is weakly invariant then, for every $(t_0, \varphi_0, \alpha)\in [0,T)\times \Vspace\times \mathbb{R}$, there exists a trajectory $x$ with $\dot{x}(t,\cdot)\in F(t,x(t,\cdot))$ for a.e. $t\in [t_0, T)$, such that $x(t_0)=\varphi_0$ and $(x(t), \alpha)\in P_\theta(t)$ for all $t\in [t_0, T)$. Choosing $\alpha=\theta(t_0,\varphi_0)$, the weak invariance property implies 
$$ \theta(t,x(t))\leq\theta(t_0,\varphi_0) \qquad \text{for all } \, t\in[t_0, T).
$$
In view of the lower semi-continuity of $\theta$, letting $t\rightarrow T^-$, one obtains the inequality
$$
\mathcal{J}(x(T))\leq \liminf_{t\rightarrow T^-} \theta(t, x(t))\leq\theta(t_0,\varphi_0)$$
and, taking the infimum over all the trajectories, one gets
$$V(t_0,\varphi_0)\leq \theta(t_0,\varphi_0).$$

$ii)$  Since the pair $(\{1\}\times \tilde F \times\{0\}, \mathrm{Gr}\tilde{P}_\theta)$ is strongly invariant, this implies that, for every $(t_0, \varphi_0, \alpha)$ such that $(\varphi_0, \alpha)\in \tilde P_\theta(t_0)$, every solution of the Cauchy problem 
\be\label{Cau.-F}
\begin{cases}
    \dot{x}(t)\in \tilde F(t,x(t)),\\
    x(t_0)=\varphi_0, 
\end{cases}
\ee
satisfies the condition
$$
\theta(T-t, x(t))\leq \alpha \qquad \text{for all }\, t\in[t_0, T].
$$
Let us fix $t_0\in \mathbb{R}$, $\varphi_0\in \Vspace$ and $\alpha=\theta(T, \varphi_0)=\mathcal{J}(\varphi_0)$. Then the previous inequality reads as
\begin{equation}\label{theta_good}
\theta(T-t, x(t))\leq \mathcal{J}(\varphi_0) \qquad \text{for all } t\in[t_0, T]
\end{equation}
for any solution of the Cauchy problem \eqref{Cau.-F}. Let us define $y(s):=x(T-s+t_0)$ and observe that 
$$
\frac{d}{ds}y(s)= \frac{d}{ds}x(T-s)\in - \tilde{F}(s, x(T-s))= F(s, y(s)) \qquad \text{a.e. }\, s\in [t_0, T],
$$
and final condition $y(T)=x(t_0)=\varphi_0$. Reformulating the relation \eqref{theta_good} with respect to the new time variable $s=T-t+t_0$, one obtains
$$
\theta(s, y(s))\leq {\mathcal J}(y(T)) \qquad \text{for all } s\in [t_0,T].$$
Finally,  taking the infimum over the set of trajectories in latter equation, one gets
$$\theta(s, y(s))\leq V(s, y(s)),$$
which is the desired relation when $s=t_0$. This concludes the proof.
\end{proof}

$$
\operatorname{meas}\{t \in [0,T] | \,  \exists \, \varphi\in L^2, N^P_{\mathrm{Gr} P} (t,\varphi,V(t,\varphi))\subset \mathbb{R}\times \Vspace \times \{0\} \}=0
$$

\begin{theorem}\label{teo.prox.sol} Let us assume that ${\bf (H)},$ ${\bf (H_{\mu})}$ (or ${\bf (K)}),$ and  $\mathrm{\textbf{(C)}}$ hold true. Suppose that 
\be\label{q=0}
\begin{aligned}
    \operatorname{meas}\{t &\in [0,T] | \,  \exists \, \varphi\in \Vspace, \\ &\ \{ 0\}\neq N^P_{\mathrm{epi}\,V} (t,\varphi,V(t,\varphi))\subset \mathbb{R}\times \Vspace \times \{0\} \}=0.
\end{aligned}
\ee
Then the value function $V$ of problem $(P)_{s,\varphi}$ is the unique lower semi-continuous, bounded from below function  such that
 there exists a set $I\subseteq [0,T)$ of full measure for which, for every $(t,\varphi, \alpha)\in \mathrm{epi}V \cap (I\times \Vspace\times \mathbb{R})$, one has
\be\label{HJBproximal}
\begin{array}{ll}
\displaystyle \xi_t+ \min_{v\in F(t, \varphi)} \left< v, \xi_{\varphi} \right>=0 \qquad \text{for all }\; (\xi_t, \xi_{\varphi})\in \partial_{P}V(t,\varphi),\\[3mm]
V(T,\varphi)=\mathcal{J}(\varphi).
\end{array}
\ee
\end{theorem}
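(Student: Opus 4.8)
The plan is to prove the two assertions separately---that $V$ itself satisfies \eqref{HJBproximal}, and that it is the only such function---reading in both cases the relations \eqref{HJBproximal} through the invariance characterizations of Theorems \ref{w.i.-char} and \ref{s.i.-char} applied to $\mathrm{epi}\,V=\mathrm{Gr}\,P$. The terminal condition $V(T,\varphi)=\mathcal{J}(\varphi)$ is immediate from the definition of $V$ together with Theorem \ref{charV}, so the real content is the first relation in \eqref{HJBproximal}.

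For the existence half I would start from Proposition \ref{dpp-inv}. The weak invariance of $(\{1\}\times F\times\{0\},\mathrm{Gr}\,P)$ feeds into Theorem \ref{w.i.-char}, producing a full-measure set $I_1\subset[0,T)$ on which every proximal normal $(\xi_t,\xi_\varphi,\xi_\alpha)\in N^P_{\mathrm{epi}\,V}(t,\varphi,V(t,\varphi))$ obeys $\xi_t+\min_{v\in F(t,\varphi)}\langle v,\xi_\varphi\rangle\leq 0$; specializing to the proper normals $\xi_\alpha=-1$, i.e.\ to $(\xi_t,\xi_\varphi)\in\partial_P V(t,\varphi)$, gives the supersolution inequality. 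Symmetrically, the strong invariance of $(\{1\}\times\tilde F\times\{0\},\mathrm{Gr}\,\tilde P)$ and Theorem \ref{s.i.-char} yield, on a full-measure set $I_2$, the bound $\xi_0+\max_{v\in\tilde F}\langle v,\xi_\varphi\rangle\leq 0$ for normals of $\mathrm{Gr}\,\tilde P$; undoing the reflection $t\mapsto T-t$ (which flips the sign of the time-component of the normal and, because $\tilde F=-F(T-\cdot,\cdot)$, turns the maximum over $\tilde F$ into minus the minimum over $F$) converts this into $\xi_t+\min_{v\in F(t,\varphi)}\langle v,\xi_\varphi\rangle\geq 0$ for $(\xi_t,\xi_\varphi)\in\partial_P V$. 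On $I=I_1\cap I_2$ the two inequalities collapse to the equality in \eqref{HJBproximal}.

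For uniqueness, let $\theta$ be any l.s.c., bounded-below function meeting \eqref{HJBproximal} on a full-measure set, with $\theta(T,\cdot)=\mathcal{J}$. The idea is to run the characterization theorems in reverse and then invoke the comparison Theorem \ref{comparison}. From the subgradient equality I would recover the Hamiltonian inequalities $\xi_0+\min_{v\in F}\langle v,\xi_\varphi\rangle\leq 0$ and $\xi_0+\max_{v\in\tilde F}\langle v,\xi_\varphi\rangle\leq 0$ for all proximal normals of $\mathrm{epi}\,\theta$ and its reflection; together with the left-absolute-continuity of $t\leadsto P_\theta(t)$ these give, via Theorems \ref{w.i.-char} and \ref{s.i.-char}, the weak invariance of $(\{1\}\times F\times\{0\},\mathrm{Gr}\,P_\theta)$ and the strong invariance of $(\{1\}\times\tilde F\times\{0\},\mathrm{Gr}\,\tilde P_\theta)$. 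Theorem \ref{comparison} then delivers $V\leq\theta$ and $V\geq\theta$ at once, forcing $\theta=V$.

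The delicate point throughout---and where hypothesis \eqref{q=0} is indispensable---is the passage from the Hamiltonian relations for proper subgradients ($\xi_\alpha=-1$), which is all the HJB equation directly provides, to the same relations for every proximal normal, including the horizontal ones with $\xi_\alpha=0$ demanded by the invariance characterizations. Since proximal normals to an epigraph always satisfy $\xi_\alpha\leq 0$, the only gap is the horizontal cone; at times where the whole normal cone is nontrivial yet horizontal there is no proper subgradient to test against, and \eqref{q=0} is precisely the statement that such times are negligible. At the remaining times I would dispose of a coexisting horizontal normal by approximating it with rescaled proper normals at neighbouring graph points and passing to the limit, using the continuity of $F(t,\cdot)$ and the closedness of the proximal normal-cone structure to preserve the Hamiltonian inequality. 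Securing this limiting argument, together with the left-absolute-continuity of $P_\theta$ needed for the weak-invariance half, is the main obstacle; the remainder is bookkeeping with the time reflection.
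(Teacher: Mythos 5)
Your proposal follows essentially the same route as the paper: weak invariance of $\mathrm{Gr}\,P$ plus Theorem \ref{w.i.-char} for the inequality $\leq 0$, strong invariance of the time-reversed system plus Theorem \ref{s.i.-char} for $\geq 0$, normalization of the proper proximal normals (using positive homogeneity of the Hamiltonian) to obtain \eqref{HJBproximal}, and Theorem \ref{comparison} for uniqueness. The two obstacles you flag are genuine but are resolved in the paper by fiat rather than by your proposed limiting argument: hypothesis \eqref{q=0} is invoked to discard the null set of times at which the proximal normal cone is nontrivial yet purely horizontal, and the (left) absolute continuity of $t\leadsto P_\theta(t)$ is simply imposed as an additional assumption on the competitor $\theta$ in the uniqueness step.
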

\begin{proof}
 In view of Theorem \ref{charV}, Proposition \ref{dpp-inv} and Theorem \ref{w.i.-char},  the value function is a lower semi-continuous, bounded below  function and there exists a full measure interval $I_1\subseteq [0,T)$ such that, for every $(t,\varphi,\alpha)\in \mathrm{epi}V \cap \left(I_1\times \Vspace\times \mathbb{R}\right)$, one has
\begin{equation}\label{ineq-wi-proof}
\xi_0+ \min_{v\in F(t, \varphi)} \left< v, \xi \right>\leq 0 \qquad \forall\; (\xi_0, \xi, -q)\in N^P_{\mathrm{Gr} P} (t,\varphi,\alpha).\\[2mm]
\end{equation}
Set $s=T-t$ and  observe that, in view of the definition of $\mathrm{Gr}{P}$, the condition $(T-s,\varphi,\alpha)\in \mathrm{Gr} P$ implies $(s,\varphi,\alpha)\in \mathrm{Gr} \tilde{P}$. Furthermore, an easy computation of the proximal normal cone $N^P_{\mathrm{Gr}P}$ shows that, for every $(T-s,\varphi,\alpha)\in \mathrm{Gr} P$,
$$
\text{if} \;\;  (\xi_0, \xi, -q)\in N^P_{\mathrm{Gr} P} (T-s,\varphi,\alpha),\;\; \text{then}\;\; (-\xi_0, \xi, -q)\in N^P_{\mathrm{Gr}\tilde{P}} (s,\varphi,\alpha).
$$
On the other hand, in view of Proposition \ref{dpp-inv} and Theorem \ref{s.i.-char},   there exists a full measure interval $I_2\subseteq [0,T]$ such that 
the following  conditions hold 
\benl
\begin{array}{ll}
-\xi_0+ \displaystyle\max_{v\in \tilde{F}(s, \varphi)} \left< v, \xi \right>\leq 0 \qquad \text{for all }\; (-\xi_0, \xi, -q)\in N^P_{\mathrm{Gr} \tilde {P}} (s,\varphi,\alpha),
\\
[4mm]
V(T,\varphi)=\mathcal{J}(\varphi),
\end{array}
\eenl
one can rewrite the conditions above as
\begin{equation}\label{ineq-si-proof}
\begin{array}{ll}
-\xi_0+ \displaystyle\max_{v\in -F(t, \varphi)} \left< v, \xi \right>\leq 0 \qquad \text{for all }\; (-\xi_0, \xi, -q)\in N^P_{\mathrm{Gr} \tilde {P}} (T-t,\varphi,\alpha),
\\
[4mm]
V(T,\varphi)=\mathcal{J}(\varphi).
\end{array}
\end{equation}
In particular the relations \eqref{ineq-wi-proof} and \eqref{ineq-si-proof} imply that there exists a full measure interval $I_3:=I_1\bigcap I_2\subseteq [0,T]$ such that, for every $(t,\varphi,\alpha)\in \mathrm{epi}V \cap \left(I_3\times \Vspace\times \mathbb{R}\right)$, the following conditions hold:
\begin{equation}\label{characterization}
\begin{array}{ll}
\displaystyle \xi_0+ \min_{v\in F(t, \varphi)} \left< v, \xi \right>=0 \qquad \text{for all }\; (\xi_0, \xi, -q)\in N^{P}_{\mathrm{epi} V}(t,\varphi,\alpha),\\[3mm]
V(T,\varphi)=\mathcal{J}(\varphi).
\end{array}
\end{equation}
From the properties of the proximal cone $N^{P}_{\mathrm{epi} V}$ (see e.g. \cite{Clarke1998}), we have that $q \geq 0:$ $\alpha=V(t,\varphi)$ if $q>0,$ and $q=0$ if $\alpha> V(t,\varphi).$ In this last case, {it follows that  
\be\label{q=0b} (\xi_0,\xi,0)\in N^{P}_{\mathrm{epi} V}(t,\varphi,V(t,\varphi) ),
\ee 
and from \eqref{q=0}, we deduce that there exists a full-measure interval $I_4\subset[0,T]$ such that \eqref{q=0b} does not hold. Thus, we set $I:=I_3 \cap I_4$ and consider the remaining case $q>0.$ Define} $\xi_t:= \xi_0 /q$ and  $\xi_{\varphi}:=\xi /q.$ The convexity of the proximal cone ensures that 
\be\label{Pro.N}
(\xi_t,\xi_{\varphi},-1)\in N^{P}_{\mathrm{epi} V}(t,\varphi,V(t,\varphi) )
\ee
and since the Hamiltonian $\min_{v\in F(t, \varphi)} \left< v, \xi \right>$ is positively homogeneous with respect $\xi,$ the relations \eqref{characterization} and \eqref{Pro.N} imply the conditons \eqref{HJBproximal}.  

Now, suppose that $\theta: [0,T]\times \Vspace\rightarrow \mathbb{R}$  is any other  lower semi-continuous, bounded from below  function satisfying the relation \eqref{HJBproximal} 
and  such that the set-valued map
$$t\leadsto P_{\theta}(t):=\left\{(\varphi, \alpha)\in \Vspace\times \mathbb{R}:\quad \theta(t,\varphi)\leq \alpha  \right\}$$
is absolutely continuous. Then, in view of Theorems \ref{w.i.-char}, \ref{s.i.-char} and of Proposition \ref{comparison}, $\theta\equiv V$. This concludes the proof.
\end{proof}




\appendix

\if{
\begin{theorem}[Dynamic Programming Principle]
 Under the  hypothesis {\bf (H)}, the following conditions hold true:
\begin{itemize}
\item[i)] for every $\varphi \in \Vspace$, for every and $s_{1},s_{2}\in [0,T]$, with $s_{1}\leq s_{2}$, one has
\begin{equation*}
V(s_{1},\varphi) \leq V(s_{2},x_{\varphi,u}(s_{2};\,\cdot)),
\end{equation*}
for any measurable control $u\in \Uad[s_1,s_2],$ where $x_{\varphi,u}$ is the trajectory with initial condition $x_{\varphi,u}(s_1,\cdot)=\varphi(\cdot)$ and associated to control $u\in \Uad[s_1,s_2];$
\item[ii)] for every $\varphi \in \Vspace$  and $s_{1},s_{2}\in [0,T]$, with $s_{1}\leq s_{2}$, one has
\begin{equation*}
V(s_{1},\varphi) = \inf_{u\in \Uad[s_1,T]} V(s_{2},x_{\varphi,u}(s_2,\cdot)).
\end{equation*}
\end{itemize}
\end{theorem}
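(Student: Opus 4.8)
The plan is to reduce everything to the \emph{concatenation} (flow/semigroup) property of the trajectories of \eqref{trajectory}, which is the only substantive ingredient; the remaining work is bookkeeping with infima. Given $s_1\le s_2\le T$, a control $u\in\Uad[s_1,s_2]$ and a control $w\in\Uad[s_2,T]$, I would introduce the concatenated control
$$
(u\diamond w)(t):=\begin{cases} u(t), & t\in[s_1,s_2),\\ w(t), & t\in[s_2,T],\end{cases}
$$
which is measurable, essentially bounded, and satisfies $(u\diamond w)(t)\in U(t)$ a.e., hence $u\diamond w\in\Uad[s_1,T]$. Writing $\psi:=x_{\varphi,u}(s_2,\cdot)$, uniqueness of the solution of \eqref{trajectory} (guaranteed under {\bf (H)} by \cite[Ch.6, Theorem 1.2]{Pazy}) forces $x_{\varphi,u\diamond w}$ to coincide with $x_{\varphi,u}$ on $[s_1,s_2]$ and with $x_{\psi,w}$ on $[s_2,T]$; in particular $x_{\varphi,u\diamond w}(T,\cdot)=x_{\psi,w}(T,\cdot)$, so that $J_{[s_1,T]}(\varphi,u\diamond w)=J_{[s_2,T]}(\psi,w)$, since the cost depends only on the terminal state.

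For part (i) I would fix $u\in\Uad[s_1,s_2]$, set $\psi:=x_{\varphi,u}(s_2,\cdot)$, and observe that for every $w\in\Uad[s_2,T]$ the identity above together with the definition of $V$ gives
$$
V(s_1,\varphi)\le J_{[s_1,T]}(\varphi,u\diamond w)=J_{[s_2,T]}(\psi,w).
$$
Taking the infimum over $w\in\Uad[s_2,T]$ then yields $V(s_1,\varphi)\le V(s_2,\psi)=V(s_2,x_{\varphi,u}(s_2,\cdot))$, which is exactly \eqref{DPP_1}.

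For part (ii), the inequality ``$\le$'' is immediate from (i): since $x_{\varphi,u}(s_2,\cdot)$ depends only on $u|_{[s_1,s_2]}$ and every element of $\Uad[s_1,s_2]$ extends to one of $\Uad[s_1,T]$, taking the infimum over $u$ in \eqref{DPP_1} gives $V(s_1,\varphi)\le\inf_{u\in\Uad[s_1,T]}V(s_2,x_{\varphi,u}(s_2,\cdot))$. For the reverse inequality I would fix $\varepsilon>0$ and choose an $\varepsilon$-optimal control $u^\varepsilon\in\Uad[s_1,T]$ with $J_{[s_1,T]}(\varphi,u^\varepsilon)\le V(s_1,\varphi)+\varepsilon$. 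Writing $\psi^\varepsilon:=x_{\varphi,u^\varepsilon}(s_2,\cdot)$ and applying the concatenation identity to the restrictions $u^\varepsilon|_{[s_1,s_2]}$ and $u^\varepsilon|_{[s_2,T]}$ gives
$$
V(s_2,\psi^\varepsilon)\le J_{[s_2,T]}(\psi^\varepsilon,u^\varepsilon|_{[s_2,T]})=J_{[s_1,T]}(\varphi,u^\varepsilon)\le V(s_1,\varphi)+\varepsilon,
$$
so that $\inf_{u}V(s_2,x_{\varphi,u}(s_2,\cdot))\le V(s_1,\varphi)+\varepsilon$; letting $\varepsilon\to0$ closes the argument and establishes \eqref{DPP_2}.

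The only genuinely delicate point is justifying the semigroup identity $x_{\varphi,u\diamond w}(T,\cdot)=x_{\psi,w}(T,\cdot)$ rigorously in the Hilbert-space setting. This amounts to checking that gluing and restricting $L^\infty$ controls preserves admissibility (true, because the constraint $u(t)\in U(t)$ is pointwise a.e.\ in time) and that the integral solution of \eqref{trajectory} on each subinterval is uniquely determined by its value at the splitting time $s_2$, which is precisely the content of the cited existence–uniqueness theorem. I would emphasize that no compactness of the trajectory set and no lower semi-continuity of $V$ is needed at this stage, which is why this result legitimately precedes the analysis of Section \ref{sec.op.traj}.
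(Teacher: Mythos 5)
Your proof is correct and follows essentially the same route as the paper's: both arguments rest on the concatenation of admissible controls, the semigroup/uniqueness property of the trajectories giving $J_{[s_1,T]}(\varphi,u\diamond w)=J_{[s_2,T]}(\psi,w)$, and the $\varepsilon$-characterization of the infimum for the reverse inequality in (ii). The only difference is cosmetic: the paper proves part (i) by contradiction, whereas you argue directly by taking the infimum over the second-stage control, which is if anything slightly cleaner.
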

}\fi

\section{Proof of Theorem \ref{DPP}: the Dynamic Programming Principle}
\label{ProofDPP}

\begin{proof} 
i). Suppose by contradiction that the assertion in i) of Theorem \ref{DPP} does not hold. Then there exist $s_{1}, s_{2}\in [0,T]$, $s_{1}<s_{2}$,  a control  $u_1 \in \Uad[s_1,s_2]$ and $\epsilon>0$ such that
\begin{equation}
V(s_{1},\varphi) > V(s_{2},x_{\varphi,u_1}(s_{2};\cdot)) + \epsilon
\end{equation}
On the other hand, by  characterization of the infimum,  there exists a control $u_2\in \Uad[s_2,T]$ such that
$$
V(s_{2},x_{\varphi,u_1}(s_{2};\cdot))>J_{[s_2,T]}(x_{\varphi,u_1}(s_2,\cdot),u_2) -\frac{\eps}{2}.$$
Consider the control $u\in \Uad[s_1,T]$ obtained from concatenating $u_1$ and $u_2:$
\begin{equation*}
u(s):=\left \{ 
\begin{array}{ll}
\ds u_1(s) \qquad s\in [s_{1}, s_{2}),\\
\ds u_2(s) \qquad s\in [s_{2},T].
\end{array} \right .
\end{equation*}
It easily follows from the definition of value function that
\begin{multline}
 V(s_{1},\varphi) \leq J_{[s_1,T]}(\varphi,u) = J_{[s_2,T]}(x_{\varphi,u_1}(s_2,\cdot),u_2) \\
< V(s_{2},x_{\varphi,u_1}(s_{2};\cdot))+ \frac{\eps}{2} < V(s_{1},\varphi)- \frac{\eps}{2},
\end{multline}
which leads a contradiction. This completes the proof of i).

ii) It only remains to prove that 
$$
V(s_{1},\varphi) \geq \inf_{u\in \Uad[s_1,T]} V(s_{2},x_{\varphi,u}(s_2,\cdot)).
$$
Let $\eps>0$ and $u_{{\eps}}\in\Uad[s_1,T]$ be a control such that
$$
J_{[s_1,T]}(\varphi,u_{{\eps}}) \leq V(s_1,\varphi) + \eps.
$$
Observe that
$$
V(s_2,x_{\varphi,u_{{\eps}}}(s_2,\cdot)) \leq J_{[s_2,T]}(x_{\varphi,u_{{\eps}}}(s_2,\cdot),u_{{\eps}}).
$$
Then
\benl
\begin{aligned}
\ds \inf_{u\in \Uad[s_1,T]} V(s_{2},x_{\varphi,u}(s_2,\cdot)) & \leq V(s_{2},x_{\varphi,u_{{\eps}}}(s_2,\cdot))\\
 & \ds \leq J_{[s_2,T]}(x_{\varphi,u_{{\eps}}}(s_2,\cdot),u_{{\eps}}) \\
 &= J_{[s_1,T]}(\varphi,u_{{\eps}}) \leq V(s_1,\varphi) + \eps.
 \end{aligned}
\eenl
This yields the desired inequality and completes the proof.
\end{proof}

\section{Measurable Selections and integral functionals}
\label{AppMeasurable}


Let $\mathbf{T}$ and $X$ be two topological spaces. Let $2^X$ be the set of all nonempty subsets of $X$. We call any map $\Gamma: \mathbf{T} \rightarrow 2^X$ a multifunction, we will denote it as $\Gamma: \mathbf{T} \leadsto X.$

\begin{definition}[Continuity of set-valued maps]
     A multifunction $\Gamma:\mathbf{T} \leadsto X$ is said to be {\em upper semi-continuous} if, for any closed subset $F \subset X$, the set$$\Gamma^{-1}(F) \equiv\{t \in \mathbf{T} \mid \Gamma(t) \bigcap F \neq \phi\}$$is closed in $\mathbf{T}.$ $\Gamma$ is said to be {\em lower semi-continuous} if for any open subset $U \subset X, \Gamma^{-1}(U)$ is open in $\mathbf{T}$. Finally, $\Gamma$ is said to be {\em continuous} if it is both upper and lower semi-continuous.
     \end{definition}
When $\Gamma$ is single-valued, the above three kinds of continuities are equivalent.

\begin{definition}[Polish space]
    A topological space $X$ is called a {\em Polish space} if it has a countable base, is metrizable, and the space is complete under a metric compatible with the topology of the space.
\end{definition}

\begin{definition}[Souslin space]
    A topological space $X$ is called a {\em Souslin space} if it is Hausdorff and there exists a Polish space $P$ and a continuous map $g: P\rightarrow X,$ such that $$g(P) = X.$$ A subset $A \subset X$ is said to be {\em Souslinian} if as a subspace, A is a Souslin space. Sometimes, we also call such an A a {\em Souslin set}.
\end{definition}

\begin{definition}
    A multifunction $\Gamma: {\bf T} \leadsto X$ is said to be {\em Lebesgue (resp. Borel, Souslin) measurable} if for any closed set $F\subset X,$ the set $\Gamma^{- 1}(F)$ is Lebesgue (resp. Borel, Souslin) set in {\bf T}.
\end{definition}
We will suppose that $\mathbf{T}=[0,T]$ and $X=\cR^m$  with the usual topology on $\cR$ and $\cR^m$ respectively.
\begin{definition}[Pseudocontinuity of set-valued functions]
\label{pseudocontinuous}
 A multifunction $U: [0,T] \leadsto \cR^m$ is said to be {\em pseudo-continuous} at $t \in [0,T]$ if $$\bigcap_{\varepsilon>0} \overline{U\left(\mathcal{B}_{\varepsilon}(t)\right)}=U(t) .$$We say that $U$ is pseudo-continuous on $[0,T]$ if it is pseudo-continuous at each point $t \in [0,T]$.
\end{definition}

\begin{proposition}
\label{multifunctionU}
     Let ${\bf U} \subset \cR^m $ be a compact metric space and $U:[0,T]\leadsto {\bf U} $ be a multifunction taking closed set values. Then $U$ is upper semi-continuous if and only if it is pseudo-continuous.
\end{proposition}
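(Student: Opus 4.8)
The plan is to exploit the fact that one inclusion in the pseudo-continuity identity is automatic. Since $t\in\mathcal{B}_\varepsilon(t)$ we always have $U(t)\subseteq U(\mathcal{B}_\varepsilon(t))\subseteq \overline{U(\mathcal{B}_\varepsilon(t))}$ for every $\varepsilon>0$, whence $U(t)\subseteq\bigcap_{\varepsilon>0}\overline{U(\mathcal{B}_\varepsilon(t))}$ with no hypothesis at all. Thus pseudo-continuity at $t$ is \emph{equivalent} to the reverse inclusion $\bigcap_{\varepsilon>0}\overline{U(\mathcal{B}_\varepsilon(t))}\subseteq U(t)$, and the whole proposition reduces to matching this reverse inclusion with upper semi-continuity. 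I would also record at the outset the standard reformulation of the closed-preimage definition: $U^{-1}(F)$ is closed for every closed $F$ if and only if $\{t: U(t)\subseteq O\}$ is open for every open $O$ (take complements and use $O=F^c$), which is in turn equivalent to the pointwise statement that, whenever $O\supseteq U(t)$ is open, there is $\varepsilon>0$ with $U(\mathcal{B}_\varepsilon(t))\subseteq O$.

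For the implication \emph{u.s.c.} $\Rightarrow$ \emph{pseudo-continuous} I would argue by contraposition on the reverse inclusion. Fix $t$ and $y\notin U(t)$; since $U(t)$ is a nonempty closed subset of the metric space, $r:=\tfrac12\,\mathrm{dist}(y,U(t))>0$, and the open set $O:=\{z:\mathrm{dist}(z,U(t))<r\}$ contains $U(t)$, while a triangle-inequality estimate gives $\mathrm{dist}(y,z)>r$ for all $z\in O$, so that $y\notin\overline{O}$. Applying the neighborhood form of u.s.c. to this $O$ yields $\varepsilon>0$ with $U(\mathcal{B}_\varepsilon(t))\subseteq O$, hence $\overline{U(\mathcal{B}_\varepsilon(t))}\subseteq\overline{O}\not\ni y$; therefore $y\notin\bigcap_{\varepsilon>0}\overline{U(\mathcal{B}_\varepsilon(t))}$. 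This is exactly the reverse inclusion, giving pseudo-continuity. Note that this direction does not use compactness of $\mathbf{U}$.

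For the converse \emph{pseudo-continuous} $\Rightarrow$ \emph{u.s.c.} I would verify the closed-preimage definition directly by a sequential argument, which is where compactness enters. Let $F$ be closed and let $t_n\to t$ with $t_n\in U^{-1}(F)$; choose $y_n\in U(t_n)\cap F$. Because $y_n\in\mathbf{U}$ and $\mathbf{U}$ is compact, a subsequence converges, $y_{n_k}\to y$, and $y\in F$ since $F$ is closed. For each fixed $\varepsilon>0$ we have $t_{n_k}\in\mathcal{B}_\varepsilon(t)$ for $k$ large, so $y_{n_k}\in U(\mathcal{B}_\varepsilon(t))$ and hence $y\in\overline{U(\mathcal{B}_\varepsilon(t))}$; letting $\varepsilon$ run through all positive values and invoking pseudo-continuity gives $y\in\bigcap_{\varepsilon>0}\overline{U(\mathcal{B}_\varepsilon(t))}=U(t)$. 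Thus $y\in U(t)\cap F$, so $t\in U^{-1}(F)$, and since $[0,T]$ is metric this sequential closedness shows $U^{-1}(F)$ is closed.

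The only genuinely delicate points are bookkeeping ones: establishing the neighborhood reformulation of the closed-preimage definition (purely formal, via complementation), and being careful that compactness of $\mathbf{U}$ is exactly what supplies the cluster point $y$ in the converse --- without it a sequence $y_n\in U(t_n)$ need not accumulate, and the implication can fail. I expect the metric separation estimate in the forward direction (showing $y\notin\overline{O}$) to be the main computational step, but it is routine.
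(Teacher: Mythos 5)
Your proof is correct. Note, however, that the paper itself states Proposition \ref{multifunctionU} without any proof (it is treated as a known fact, in the spirit of the references cited for the surrounding material), so there is no in-paper argument to compare against; your write-up supplies the missing details. The argument you give is the standard one and all steps check out: the inclusion $U(t)\subseteq\bigcap_{\varepsilon>0}\overline{U(\mathcal{B}_\varepsilon(t))}$ is indeed automatic; the complementation identity $\{t:U(t)\cap F=\emptyset\}=\{t:U(t)\subseteq F^{c}\}$ correctly converts the paper's closed-preimage definition into the neighborhood form; the separation estimate $d(y,z)\geq d(y,w)-d(w,z)>2r-r=r$ for $z\in O$ and a suitable $w\in U(t)$ does give $y\notin\overline{O}$ (using that $U(t)$ is closed and nonempty so that $\operatorname{dist}(y,U(t))>0$); and in the converse the compactness of $\mathbf{U}$ is exactly what produces the cluster point $y$, while sequential closedness suffices because $[0,T]$ is metric. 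Your bookkeeping of which hypothesis enters where --- closedness of the values only in the forward implication, compactness of $\mathbf{U}$ only in the converse --- is accurate and is worth keeping, since it makes clear that the converse genuinely fails without compactness (e.g.\ values escaping to infinity along $t_n\to t$).
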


\begin{theorem}\label{Filippov}[Filippov's Lemma \cite[P. 102]{liYong}]
Let $U: [0,T] \leadsto \cR^m$ be measurable taking closed set values and $Y$  a Polish space. Let $f: [0,T] \times \cR^m \rightarrow Y$ be Souslin measurable and for each $\bar{u} \in \cR^m$, $f(\cdot, \bar{u})$ is measurable; for almost all $t \in [0,T],$ $ f(t, \cdot)$ is continuous. Let $y$ : $[0,T] \rightarrow Y$ be Lebesgue measurable satisfying
$$
y(t) \in f(t,U(t)), \quad \text { a.e. } t \in [0,T] \text {. }
$$
Then there exists a measurable function $u: [0,T] \rightarrow \cR^m$, such that
$$
\left\{\begin{array}{l}
u(t) \in U(t) \quad \text { a.e. } t \in [0,T], \\ \\
y(t)=f(t, u(t)) \quad \text { a.e. } t \in [0,T] .
\end{array}\right.
$$
\end{theorem}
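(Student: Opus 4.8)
The plan is to exhibit $u$ as a measurable selection of the \emph{solution multifunction}
$$
\Gamma(t):=\bigl\{v\in U(t):\ f(t,v)=y(t)\bigr\},\qquad t\in[0,T],
$$
and to recover all of the conclusions from a single selection theorem. First I would note that the standing hypothesis $y(t)\in f(t,U(t))$ says precisely that $\Gamma(t)\neq\varnothing$ for almost every $t$; and at each $t$ where $f(t,\cdot)$ is continuous we have $\Gamma(t)=U(t)\cap f(t,\cdot)^{-1}(\{y(t)\})$, the intersection of two closed subsets of $\cR^m$, hence closed. Thus $\Gamma$ is a nonempty closed-valued multifunction on a set of full measure, and any Lebesgue measurable $u$ with $u(t)\in\Gamma(t)$ a.e.\ automatically satisfies both $u(t)\in U(t)$ and $y(t)=f(t,u(t))$ almost everywhere.

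The heart of the matter is the measurability of $\Gamma$, which I would obtain through its graph. Fixing a complete metric $d_Y$ compatible with the Polish topology of $Y$, write
$$
\mathrm{Gr}\,\Gamma=\mathrm{Gr}\,U\ \cap\ \bigl\{(t,v)\in[0,T]\times\cR^m:\ d_Y\bigl(f(t,v),y(t)\bigr)=0\bigr\}.
$$
Since $U$ is a measurable multifunction with closed values, $\mathrm{Gr}\,U$ is $\mathcal{L}\otimes\mathcal{B}$-measurable. The map $f$ is Souslin measurable by assumption and $(t,v)\mapsto y(t)$ is measurable, so $(t,v)\mapsto d_Y\bigl(f(t,v),y(t)\bigr)$ is Souslin measurable as a composition with the continuous metric; its zero set is therefore a Souslin subset of $[0,T]\times\cR^m$. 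Consequently $\mathrm{Gr}\,\Gamma$ is Souslin.

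It then remains to extract the selection. On the complete finite measure space $([0,T],\mathcal{L})$, a multifunction whose graph is Souslin admits a Lebesgue measurable selection by the von Neumann--Aumann measurable selection theorem, while the associated projection theorem guarantees that the domain $\{t:\Gamma(t)\neq\varnothing\}$ is measurable; by the first paragraph this domain has full measure. Choosing such a selection $u$, and defining it arbitrarily on the remaining null set, yields the asserted function.

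The genuinely delicate step is this last selection. Because $f$ is assumed only \emph{Souslin} --- not Borel --- measurable, the elementary Kuratowski--Ryll-Nardzewski theorem cannot be invoked directly, and one must instead rely on the projection theorem for Souslin sets together with the von Neumann--Aumann selection theorem, which is exactly where the completeness of the Lebesgue measure on $[0,T]$ is used. By contrast, the closedness of $\Gamma(t)$ furnished by the a.e.\ continuity of $f(t,\cdot)$, and the discarding of the null set where this continuity fails, are entirely routine.
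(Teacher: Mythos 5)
The paper does not prove this statement at all: it is quoted verbatim as Filippov's Lemma from the cited reference (Li--Yong, p.~102), so there is no in-paper proof to compare against. Your argument is the standard proof of that result and is essentially the one found in the reference: reduce to a measurable selection of $\Gamma(t)=\{v\in U(t):f(t,v)=y(t)\}$, establish that $\mathrm{Gr}\,\Gamma$ is Souslin, and invoke the von Neumann--Aumann selection theorem together with the projection theorem on the complete measure space $([0,T],\mathcal{L})$. Your identification of where each hypothesis is used (a.e.\ continuity of $f(t,\cdot)$ only for closedness of the values, Souslin measurability forcing the Aumann-type theorem rather than Kuratowski--Ryll-Nardzewski, completeness of Lebesgue measure for the projection step) is accurate.

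One technical point deserves more care. You assert that the zero set of $(t,v)\mapsto d_Y(f(t,v),y(t))$ is Souslin because it is a ``composition with the continuous metric.'' Writing a closed set $C\subset Y\times Y$ as $\bigcap_n\bigl((U_n^c\times Y)\cup(Y\times V_n^c)\bigr)$, the preimage under $(t,v)\mapsto(f(t,v),y(t))$ is a countable intersection of unions of Souslin sets with sets of the form $y^{-1}(V_n^c)\times\cR^m$; the latter are only Lebesgue measurable, and a Lebesgue measurable set need not be Souslin, so the class of Souslin sets is not directly closed under this operation. The standard repair is to replace $y$ by a Borel measurable modification $\tilde y$ agreeing with $y$ outside a null set (or to restrict to a full-measure Borel set on which $y$ is Borel); then the displayed intersection is genuinely Souslin and the rest of your argument goes through unchanged, since the selection is only required almost everywhere. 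With that adjustment the proof is complete.
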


\begin{corollary}\label{selection}
If $f(t,\cdot,\cdot)$ is measurable for almost all $t\in [s,T]$ and 
\benl
    \dot{x}(t) \in f(t,x(t),U(t))\ \ \ \text{a.e.}
    \ t \in [s,T],
\eenl
then, there exists a measurable function $u:[0,T]\rightarrow \cR^m $, such that
$$
\left\{\begin{array}{l}
u(t) \in U(t), \quad \text { a.e. } t \in[s, T], \\ \\
\dot{x}(t)=f(t, x(t), u(t)), \quad \text { a.e. } t \in[s, T] .
\end{array}\right.
$$
\end{corollary}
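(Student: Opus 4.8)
The plan is to obtain this corollary as a direct application of Filippov's Lemma (Theorem~\ref{Filippov}), after freezing the state variable along the given trajectory. Keep the absolutely continuous function $x(\cdot)$ appearing in the hypothesis fixed, and introduce the reduced dynamics
$$
g(t,u):= f(t,x(t),u),\qquad (t,u)\in[s,T]\times\cR^m,
$$
together with the measurable target $y(t):=\dot x(t)$. With these identifications the differential inclusion $\dot x(t)\in f(t,x(t),U(t))$ becomes exactly $y(t)\in g(t,U(t))$ a.e., and the conclusion $\dot x(t)=f(t,x(t),u(t))$ with $u(t)\in U(t)$ is precisely the measurable selection supplied by Theorem~\ref{Filippov} applied to the data $(g,U,y)$ with Polish target space $Y=\cR^n$. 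Thus the whole proof reduces to verifying that $(g,U,y)$ fulfil the hypotheses of that theorem.

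First I would record the two easy points. The target $y=\dot x$ is Lebesgue measurable because $x\in W^{1,1}([s,T]:\Vspace)$ is absolutely continuous, so its derivative exists a.e.\ and is measurable. The multifunction $U:[s,T]\leadsto\cR^m$ takes closed (in fact compact) values and is measurable: by \textbf{(H)}(ii) it is upper semi-continuous with compact values, and such a multifunction is measurable in the sense required by Theorem~\ref{Filippov}.

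The core point is that $g$ is a Carath\'eodory map, hence Souslin measurable. For each fixed $\bar u$, the map $t\mapsto g(t,\bar u)=f(t,x(t),\bar u)$ is measurable: by \textbf{(H)}(iii) the integrand $f$ is measurable in $t$ and continuous in $x$, while $t\mapsto x(t)$ is continuous, so $t\mapsto f(t,x(t),\bar u)$ is measurable by the standard fact that a Carath\'eodory integrand composed with a measurable curve is measurable. For a.e.\ fixed $t$, the map $u\mapsto g(t,u)=f(t,x(t),u)$ is continuous, again by the continuity of $f$ in its control argument in \textbf{(H)}(iii). A map that is measurable in $t$ and continuous in $u$ is jointly $\mathcal{L}\otimes\mathcal{B}$-measurable, and since the range $\cR^n$ is Polish this joint measurability is in particular Souslin measurability, as required.

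With all the hypotheses in place, Theorem~\ref{Filippov} produces a measurable $u:[s,T]\to\cR^m$ with $u(t)\in U(t)$ and $y(t)=g(t,u(t))$ a.e., that is, $\dot x(t)=f(t,x(t),u(t))$ a.e.\ on $[s,T]$, which is the assertion. I expect no genuine obstacle here: the only mildly delicate step is the joint measurability of the reduced dynamics $g$, which is handled by the classical Carath\'eodory superposition argument, while everything else is a bookkeeping translation of the inclusion into the format demanded by Filippov's Lemma.
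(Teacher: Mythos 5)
Your proposal follows exactly the paper's route: the paper's entire proof of Corollary \ref{selection} is the one-line citation ``It follows from Theorem \ref{Filippov}'', i.e.\ freeze the trajectory and apply Filippov's Lemma, which is precisely your plan; your write-up simply supplies the verification the paper leaves implicit. One slip worth correcting, because it is exactly where the infinite-dimensional nature of the problem enters: the Polish target space is not $\cR^n$ but $\Vspace$. With the paper's notational convention, $x(t)=x(t,\cdot)$ and $f(t,x(t),u)=f(t,x(t,\cdot),u,\cdot)$ are elements of $\Vspace$, so the reduced map is $g\colon[s,T]\times\cR^m\to\Vspace$, $g(t,u)=f(t,x(t,\cdot),u,\cdot)$, and $y(t)=\dot x(t)\in\Vspace$. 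This costs little: $\Vspace$ is Polish because $\Omega$ is a compact metric space and $\mu$ a finite Radon measure (hence $L^2(\mu,\Omega;\cR^n)$ is separable and complete); measurability of $t\mapsto g(t,\bar u)$ follows from your Carath\'eodory superposition argument, using that $\varphi\mapsto f(t,\varphi(\cdot),u,\cdot)$ is Lipschitz on $\Vspace$ by \eqref{fLipschitz}; and continuity of $u\mapsto g(t,u)$ in the $L^2$ norm requires not just pointwise continuity of $f$ in $u$ but a dominated convergence argument, with dominating function $c(1+|x(t,\omega)|)$ supplied by \eqref{fbounded}. With these replacements your proof is complete and coincides in substance with the paper's.
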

\begin{proof}
    It follows from Theorem \ref{Filippov}.
\end{proof}

\begin{proposition}\cite{liYong}\label{cof}
   Let $U:[0,T]\leadsto \cR^m $ be a pseudo-continuous multifunction and  $f(t,\varphi,u)$ be uniformly continuous on $(\varphi,u)\in \Vspace\times\cR^m$ for any $t\in [0,T].$ Then $f(t, \varphi, U(t))$ is closed and convex if and only if, for almost all $t \in[0, \infty)$, the set $f(t, \varphi, U(t))$ satisfies the following:
\be \label{clo.con}
\bigcap_{\delta>0} \overline{\operatorname{co}} f\left(t, \mathcal{B}_\delta(\varphi), U\left(\mathcal{B}_\delta(t)\right) \right)=f(t, \varphi, U(t) ),
\ee
where $\overline{\operatorname{co}}\, S$
is the smallest convex and closed set containing $S,$ this is, $\overline{\operatorname{co}} \, S$ denotes the convex hull of $S.$
\end{proposition}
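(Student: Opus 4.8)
The plan is to recast the identity \eqref{clo.con} entirely in terms of support functions on the Hilbert space $\Vspace$, turning the set-valued limit into a scalar one. Fix $t$ and $\varphi$ and abbreviate $A:=f(t,\varphi,U(t))$, $E_\delta:=f\left(t,\mathcal{B}_\delta(\varphi),U(\mathcal{B}_\delta(t))\right)$ and $S(\delta):=\overline{\operatorname{co}}\,E_\delta$. Since $\varphi\in\mathcal{B}_\delta(\varphi)$ and $U(t)\subseteq U(\mathcal{B}_\delta(t))$, one has $A\subseteq E_\delta\subseteq S(\delta)$, each $S(\delta)$ is closed and convex, and $\delta\mapsto E_\delta$ is nondecreasing. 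I would prove the \emph{unconditional} identity $\bigcap_{\delta>0}S(\delta)=\overline{\operatorname{co}}\,A$; the equivalence asserted in the statement is then immediate, because $\overline{\operatorname{co}}\,A=A$ holds precisely when $A$ is closed and convex. (The ``if'' direction is in any case trivial, $A$ being then an intersection of the closed convex sets $S(\delta)$.)

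Writing $\sigma_C(p):=\sup_{c\in C}\langle c,p\rangle$ for the support function, I would invoke the two standard facts that $\sigma_{\overline{\operatorname{co}}\,E}=\sigma_E$ and that a closed convex set is the intersection of the half-spaces $\{x:\langle x,p\rangle\leq\sigma_C(p)\}$ taken over $p\in\Vspace$. Exchanging the two universal quantifiers yields $\bigcap_{\delta>0}S(\delta)=\{x:\langle x,p\rangle\leq\inf_{\delta>0}\sigma_{E_\delta}(p)\ \text{for all }p\}$, so the whole proposition reduces to the scalar identity
\benl
\inf_{\delta>0}\sigma_{E_\delta}(p)=\sigma_A(p)\qquad\text{for every }p\in\Vspace .
\eenl
All these quantities are finite: $U(t)\subseteq\mathbf{U}$ is compact and $f(t,\varphi,\cdot)$ continuous, so $A$ is compact, while the uniform continuity of $f(t,\cdot,\cdot)$ sends the bounded set $\mathcal{B}_\delta(\varphi)\times\mathbf{U}$ to a bounded set, whence $\sigma_{E_\delta}(p)<\infty$. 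The inequality $\geq$ is immediate from $A\subseteq E_\delta$.

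The reverse inequality $\inf_{\delta>0}\sigma_{E_\delta}(p)\leq\sigma_A(p)$ is the heart of the matter and, I expect, the main obstacle. I would take $\delta_n\downarrow0$ together with near-maximizers $\psi_n\in\mathcal{B}_{\delta_n}(\varphi)$ and $u_n\in U(\mathcal{B}_{\delta_n}(t))$ obeying $\langle f(t,\psi_n,u_n),p\rangle>\sigma_{E_{\delta_n}}(p)-1/n$. Then $\psi_n\to\varphi$ in $\Vspace$, while $u_n\in\mathbf{U}$; by compactness of $\mathbf{U}$ (Assumption {\bf (H)}(ii)) a subsequence satisfies $u_{n_k}\to\bar u$. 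The delicate point is to pin the limit control inside $U(t)$: for each $\varepsilon>0$ one has $u_{n_k}\in U(\mathcal{B}_\varepsilon(t))$ for $k$ large, hence $\bar u\in\overline{U(\mathcal{B}_\varepsilon(t))}$, and the pseudo-continuity of $U$ forces $\bar u\in\bigcap_{\varepsilon>0}\overline{U(\mathcal{B}_\varepsilon(t))}=U(t)$. This is exactly where pseudo-continuity is indispensable, as without it the limit of the enlarged controls could escape $U(t)$. Continuity of $f(t,\cdot,\cdot)$ then gives $\langle f(t,\psi_{n_k},u_{n_k}),p\rangle\to\langle f(t,\varphi,\bar u),p\rangle\leq\sigma_A(p)$, and since $\delta\mapsto\sigma_{E_\delta}(p)$ is nondecreasing this subsequential limit coincides with $\inf_{\delta>0}\sigma_{E_\delta}(p)$. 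Combining the estimates yields the scalar identity, and therefore the desired set equality $\bigcap_{\delta>0}S(\delta)=\overline{\operatorname{co}}\,A$, which concludes the proof.
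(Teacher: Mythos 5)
The paper itself contains no proof of Proposition \ref{cof}: it is quoted from \cite{liYong} as a cited result, so there is no internal argument to compare yours against, and your proposal has to be judged on its own terms. On those terms it is correct, and it is a clean, self-contained argument. Your route is the dual one: instead of manipulating the sets directly, you reduce everything to the scalar identity $\inf_{\delta>0}\sigma_{E_\delta}(p)=\sigma_A(p)$ via the half-space representation of closed convex sets, and you in fact prove the stronger \emph{unconditional, pointwise} identity $\bigcap_{\delta>0}\overline{\operatorname{co}}\,E_\delta=\overline{\operatorname{co}}\,A$ at every fixed $(t,\varphi)$, of which the stated a.e. equivalence is an immediate corollary. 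The two quantifier exchanges (half-space representation of each $S(\delta)$, then intersection over $\delta$) are legitimate; the monotonicity of $\delta\mapsto\sigma_{E_\delta}(p)$ correctly identifies the subsequential limit with the infimum; and the step pinning $\bar u\in U(t)$ is exactly where pseudo-continuity is indispensable — indeed the pointwise identity genuinely fails without it (take $U(t_0)=\{0\}$, $U(s)=\{0,1\}$ for $s\neq t_0$, $f(t,x,u)=u$: then $\bigcap_{\delta>0}S(\delta)=[0,1]\neq\{0\}=A$ at $t_0$). One caveat deserves emphasis: the extraction of the convergent subsequence $u_{n_k}\to\bar u$ rests on the standing assumption {\bf (H)}(ii), namely that all values of $U$ lie in a fixed compact ${\bf U}\subset\cR^m$. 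This is not among the hypotheses listed in the proposition itself, which speaks only of a pseudo-continuous multifunction $U:[0,T]\leadsto\cR^m$ and a uniformly continuous $f$; for an arbitrary unbounded pseudo-continuous $U$ your compactness step has no support. Since {\bf (H)} is in force throughout the paper this is harmless here, but if the proposition is read as a standalone statement, the compactness of the control values should be added to its hypotheses (the same remark applies to your finiteness claim for $\sigma_{E_\delta}(p)$, though there uniform continuity alone already suffices, as uniformly continuous maps between normed spaces send bounded sets to bounded sets).
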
 
\begin{remark}\label{F.compact}
   If \eqref{clo.con} holds, from the compactness of $U(t)$ for all $t\in[0,T]$ and the continuity of $f$ with respect to $u,$ it follows that $f(t, \varphi, U(t) )$ is compact in $\Vspace,$ for almost all $t\in [0,T]$ and all $\varphi\in\Vspace.$  
\end{remark}

 Let $g: \mathbb{R}^n \times  \Omega\rightarrow(-\infty,+\infty]$  be a normal integrand (see ({\bf H})(iv) in Subsection \ref{notation}) and, for $p<\infty,$ consider the functional $\mathcal{J}$ (see \eqref{calJ}) over the space $L^p(\mu,\Omega;\mathbb{R}^n).$
The following result is a characterization of the lower semi-continuity of $\mathcal{J}.$
\begin{theorem}\cite{LUCCHETTIPATRONE}\label{chlsm}
Let ${\mathcal J}$ be not identically equal to $+\infty$ and let $p<\infty$. If $g$ is a normal integrand, then the following properties are equivalent:
\begin{itemize}
    \item[{\em(1)}] $\mathcal{J}$ is strongly lower semi-continuous on $L^p(\mu,\Omega;\mathbb{R}^n)$ and $$\mathcal{J}(\varphi)>-\infty \quad \forall \ \varphi \in L^p(\mu,\Omega;\mathbb{R}^n).$$
    \item[{\em(2)}] $\mathcal{J}: L^p(\mu,\Omega;\mathbb{R}^n) \rightarrow(-\infty,+\infty]$.
    \item[{\em (3)}] $\mathcal{J}:L^p(\mu,\Omega;\mathbb{R}^n)\rightarrow[-\infty,+\infty]$ and there exist $k \in \mathbb{R}, h \geq 0$ such that
    $$
    \mathcal{J}(\varphi) \geq k-h\|\varphi(\cdot)\|_{L^p(\mu,\Omega;\mathbb{R}^n) }^p \quad \text { for every } \,\varphi \in L^p(\mu,\Omega;\mathbb{R}^n) .
    $$
    \item[{\em (4)}] There exist $a \in L^1(\Omega), b \geq 0$ such that$$g(\varphi, \omega) \geq a(\omega)-b|\varphi|^p \quad \text { for every } \, (\varphi,\omega) \in \mathbb{R}^n\times \Omega. $$
\end{itemize}
If $g$ is not a normal integrand, then the following hold: (2) $\Leftrightarrow$ (3) $\Leftrightarrow$ (4) and (1) $\Rightarrow$ (2). 
\end{theorem}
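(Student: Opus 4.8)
The plan is to prove the equivalence of (1)--(4) by closing a short cycle of cheap implications around the single genuinely nontrivial one, and then to track exactly where the normal-integrand hypothesis is used so that the final assertion (the non-normal case) falls out automatically. Throughout, $\mathcal{J}(\varphi)=\int_\Omega g(\varphi(\omega),\omega)\,d\mu(\omega)$ on $L^p(\mu,\Omega;\mathbb{R}^n)$ with $p<\infty$, the measure $\mu$ is finite by \textbf{(H)}(i), and the standing hypothesis $\mathcal{J}\not\equiv+\infty$ guarantees $\operatorname{dom}\mathcal{J}\neq\emptyset$, which anchors the constructions below.

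First I would dispatch the elementary implications, none of which needs lower semicontinuity of $x\mapsto g(x,\omega)$. For (4)$\Rightarrow$(3): integrating the pointwise bound $g(\varphi,\omega)\geq a(\omega)-b|\varphi|^p$ over $\Omega$ yields $\mathcal{J}(\varphi)\geq k-h\|\varphi\|_{L^p}^p$ with the finite constant $k=\int_\Omega a\,d\mu$ and $h=b$; since $a\in L^1$ and $|\varphi|^p\in L^1$, the negative part of $g(\varphi(\cdot),\cdot)$ is dominated by an $L^1$ function, so the integral is well defined in $(-\infty,+\infty]$. For (3)$\Rightarrow$(2): on $L^p$ one has $\|\varphi\|_{L^p}<\infty$, so the bound in (3) forces $\mathcal{J}(\varphi)>-\infty$, that is, $\mathcal{J}$ maps into $(-\infty,+\infty]$. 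Finally (1)$\Rightarrow$(2) is immediate, since (1) explicitly asserts $\mathcal{J}>-\infty$ everywhere. Thus the whole matter reduces to the two implications (4)$\Rightarrow$(1) and (2)$\Rightarrow$(4).

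The implication (4)$\Rightarrow$(1) is the place where I would use that $g$ is a normal integrand, through a Fatou argument. Given $\varphi_n\to\varphi$ in $L^p$, pass to a subsequence realizing $\liminf_n\mathcal{J}(\varphi_n)$ and, along it, to a further subsequence with $\varphi_{n_k}\to\varphi$ $\mu$-a.e.\ and $|\varphi_{n_k}|\leq H$ for a single $H\in L^p$ (a standard consequence of $L^p$-convergence). Lower semicontinuity of $g(\cdot,\omega)$ gives $\liminf_k g(\varphi_{n_k}(\omega),\omega)\geq g(\varphi(\omega),\omega)$ a.e., while the bound (4) supplies the uniform integrable minorant $g(\varphi_{n_k}(\omega),\omega)\geq a(\omega)-bH(\omega)^p\in L^1$. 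Fatou's lemma with this minorant then delivers $\liminf_k\mathcal{J}(\varphi_{n_k})\geq\mathcal{J}(\varphi)$, which is strong lower semicontinuity; together with (4)$\Rightarrow$(3)$\Rightarrow$(2) it also gives $\mathcal{J}>-\infty$, completing (1).

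The crux, and the step I expect to be the main obstacle, is (2)$\Rightarrow$(4): that mere finiteness of $\mathcal{J}$ from below forces a pointwise minorant on $g$. I would argue by contraposition through measurable selection. Assuming (4) fails, consider for each $b\geq 0$ the function $\gamma_b(\omega):=\inf_{x\in\mathbb{R}^n}\big(g(x,\omega)+b|x|^p\big)$, and note that (4) is equivalent to $\gamma_b^-\in L^1$ for some $b$; its failure means $\int_\Omega\gamma_b^-\,d\mu=+\infty$ for every $b$. Since $g$ is $\mathcal{L}\otimes\mu$-measurable (only joint measurability, not lower semicontinuity in $x$, is used here), one selects measurable maps nearly attaining these infima and then patches truncated selections over a measurable partition of $\Omega$ so as to produce a single $\varphi\in L^p$ along which $\int_\Omega g(\varphi(\omega),\omega)\,d\mu=-\infty$, contradicting (2). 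The delicate balancing is to drive the integral of $g$ to $-\infty$ while keeping $\|\varphi\|_{L^p}$ finite; this is managed by working with the bounded infima $\gamma_b^N(\omega):=\inf_{|x|\le N}\big(g(x,\omega)+b|x|^p\big)$, whose selections are bounded (hence in $L^p$ as $\mu$ is finite), and choosing on each piece of the partition a truncation level making its contribution to $\|\varphi\|_{L^p}^p$ summable while its contribution to $\mathcal{J}$ remains unboundedly negative. Because this argument invokes only joint measurability and never the lower semicontinuity of $g(\cdot,\omega)$, the equivalences (2)$\Leftrightarrow$(3)$\Leftrightarrow$(4) persist when $g$ is not a normal integrand, whereas the Fatou step (4)$\Rightarrow$(1) is precisely the one that breaks down — matching the final assertion that only (1)$\Rightarrow$(2) survives in that case.
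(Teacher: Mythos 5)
The paper itself offers no proof of this statement: it is quoted verbatim from \cite{LUCCHETTIPATRONE}, and only the implication (4)$\Rightarrow$(1) is actually used (in the proof of Theorem \ref{charV}, where hypothesis {\bf (H)}(iv) supplies (4) and one concludes lower semicontinuity of $\mathcal J$). So your proposal must be judged on its own merits. Your easy implications (4)$\Rightarrow$(3)$\Rightarrow$(2) and (1)$\Rightarrow$(2) are fine, and your Fatou argument for (4)$\Rightarrow$(1) is correct and is indeed the only place where lower semicontinuity of $g(\cdot,\omega)$ enters: extracting a subsequence with an $L^p$ envelope $H$ and applying Fatou with the integrable minorant $a-bH^p$ is exactly right.

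The genuine gap is in (2)$\Rightarrow$(4). The ``delicate balancing'' you defer to requires choosing, for each $j$, a measurable subset $A_j$ of $\{\omega: g(\varphi_j(\omega),\omega)+b_j|\varphi_j(\omega)|^p<0\}$ on which $\sigma_j:=\int_{A_j}g(\varphi_j,\cdot)^-\,d\mu$ is bounded below away from $0$ (so that $\sum_j\sigma_j=\infty$) \emph{and} bounded above (so that, via the pointwise bound $|\varphi_j|^p\le g(\varphi_j,\cdot)^-/b_j$ valid on that set, the contributions $\int_{A_j}|\varphi_j|^p\le\sigma_j/b_j$ are summable). Producing such $A_j$ means splitting the measure $A\mapsto\int_A g(\varphi_j,\cdot)^-\,d\mu$ into pieces of prescribed size, and this requires $\mu$ to be non-atomic --- a hypothesis that appears neither in the statement nor in the paper's standing assumptions ($\mu$ is merely a finite Radon measure on a compact metric space). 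The implication is in fact false without it: take $\Omega=\{\omega_0\}$, $\mu=\delta_{\omega_0}$, $n=1$, and $g(x,\omega_0)=-e^{|x|}$. This is a normal integrand with values in $(-\infty,+\infty]$, and $\mathcal J(\varphi)=-e^{|\varphi|}$ is continuous and finite on $L^p(\mu)\cong\mathbb{R}$, so (1) and (2) hold; yet no inequality $-e^{|x|}\ge a-b|x|^p$ can hold for all $x$, so (3) and (4) fail. Hence either you import the non-atomicity hypothesis under which Lucchetti--Patrone prove the result --- in which case your outline of (2)$\Rightarrow$(4) can be completed along the lines you indicate, using von Neumann--Aumann measurable selection (which also needs the completed $\sigma$-algebra for the measurability of $\gamma_b$) together with the intermediate-value property of non-atomic measures --- or the equivalence you are trying to establish is simply not true as stated. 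The one direction the paper relies on, (4)$\Rightarrow$(1), is unaffected by this issue.
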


\section{Compactness results}
\label{AppCompactness}

The following is a criterion for the relative compactness in the Banach space $$ L^p(\mu,\Omega ):=\Big\{f:\Omega\rightarrow\cR:\int_{\Omega}|f(\omega)|^p\,d\mu(\omega)<\infty\Big\}.$$
\begin{theorem}\label{compactL2}\cite{Agnieszka} Let $\Omega$ be a metric space equipped with a finite Borel measure $\mu$ satisfying condition \eqref{Hyp.mu}. Then, every bounded sequence $\left\{f_k\right\} \subset L^p(\mu,\Omega ),$ with $1 \leq p<\infty$ such that
$$\sup _k \int_{\Omega}\left|f_k(\omega)-\left(f_k\right)_{\mathcal{B}_{r}(\omega)}\right|^p d \mu(\omega) \stackrel{r \rightarrow 0}{\longrightarrow} 0$$ where $ (f)_{\mathcal{B}_{r}(\omega)}:=\frac{1}{\mu(\mathcal{B}_{r}(\omega))}\int_{\mathcal{B}_{r}(\omega)}f(\omega') \, d\mu(\omega'),$ is relatively compact in $L^p(\mu,\Omega)$.
\end{theorem}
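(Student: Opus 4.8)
The plan is to prove the criterion by the classical Riesz--Kolmogorov scheme, treating the ball-average as a mollifier. Write $A_r f:=(f)_{\mathcal{B}_{r}(\cdot)}$ for the averaging operator. Since $L^p(\mu,\Omega)$ is complete, relative compactness of $\{f_k\}$ is equivalent to total boundedness, so it suffices to produce, for every $\varepsilon>0$, a finite $\varepsilon$-net. First I would fix $\varepsilon>0$ and use the standing hypothesis $\sup_k\|f_k-A_r f_k\|_{L^p}\to 0$ as $r\to0$ to choose $r>0$ with $\sup_k\|f_k-A_r f_k\|_{L^p}<\varepsilon/2$. This reduces the task to finding a finite $\varepsilon/2$-net for the mollified family $\{A_r f_k\}_k$ at the fixed scale $r$.

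Next I would show that, for this fixed $r$, the family $\{A_r f_k\}_k$ is uniformly bounded. Setting $B:=\sup_k\|f_k\|_{L^p}<\infty$ and applying Jensen's inequality to the probability measure $\mu(\mathcal{B}_r(\omega))^{-1}\mathbf{1}_{\mathcal{B}_r(\omega)}\,d\mu$, together with the uniform lower bound $\mu(\mathcal{B}_r(\omega))\geq h(r)>0$ furnished by \eqref{Hyp.mu}, yields
$$
\|A_r f_k\|_{L^\infty}\leq h(r)^{-1/p}\,B\qquad\text{for all }k,
$$
so the averaged functions are dominated by a constant $M=M(r)$ independent of $k$. It is precisely condition \eqref{Hyp.mu} that keeps the normalizing denominators away from zero and thereby makes the averaging harmless.

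The heart of the argument is the equicontinuity of $\{A_r f_k\}_k$. I would estimate $|A_r f_k(\omega)-A_r f_k(\omega')|$ by splitting it into a contribution from the mismatch of the factors $\mu(\mathcal{B}_r(\omega))$ and $\mu(\mathcal{B}_r(\omega'))$ and a contribution from integrating over the symmetric difference $\mathcal{B}_r(\omega)\,\triangle\,\mathcal{B}_r(\omega')$. Using the nesting $\mathcal{B}_{r-\delta}(\omega)\subset\mathcal{B}_r(\omega')\subset\mathcal{B}_{r+\delta}(\omega)$, valid whenever $d_\Omega(\omega,\omega')\leq\delta$, both contributions are controlled by $M$ times a quantity measuring $\mu\big(\mathcal{B}_{r+\delta}(\omega)\setminus\mathcal{B}_{r-\delta}(\omega)\big)$, which tends to $0$ as $\delta\to0$, the bound $h(r)$ again keeping the denominators uniformly positive. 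Since \eqref{Hyp.mu} together with $\mu(\Omega)<\infty$ forces $\Omega$ to be totally bounded (infinitely many $2r$-separated points would produce infinitely many disjoint balls of measure $\geq h(r)$, contradicting finiteness), the Arzel\`a--Ascoli theorem, applied on $\Omega$ (passing to the compact completion if needed), gives relative compactness of $\{A_r f_k\}_k$ in $C(\Omega)$. The elementary bound $\|g\|_{L^p}\leq\mu(\Omega)^{1/p}\|g\|_{L^\infty}$ then transfers this to $L^p(\mu,\Omega)$, producing the desired finite $\varepsilon/2$-net; concatenating it with the $\varepsilon/2$ approximation from the first step gives a finite $\varepsilon$-net for $\{f_k\}$ and concludes the proof.

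I expect the equicontinuity step to be the main obstacle, since it relies on the continuity of $\omega\mapsto\mu(\mathcal{B}_r(\omega))$ and on $\mu(\mathcal{B}_r(\omega)\,\triangle\,\mathcal{B}_r(\omega'))\to0$, which may fail for a general finite Borel measure that charges spheres. I would handle this either by selecting, near any target scale, a radius at which the spheres $\{\omega'':d_\Omega(\omega,\omega'')=r\}$ carry no mass (admissible because each monotone map $r\mapsto\mu(\mathcal{B}_r(\omega))$ has at most countably many jumps), or by replacing pointwise equicontinuity with its integrated $L^p$ version and running the total-boundedness argument through a finite covering of $\Omega$ by small balls. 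In either route, \eqref{Hyp.mu} remains the single structural hypothesis carrying the argument.
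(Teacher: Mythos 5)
First, a point of reference: the paper itself does not prove Theorem \ref{compactL2}; it is quoted from the literature (\cite{Agnieszka}, see also \cite{Hajlasz2000}) and used as a black box. So your proposal can only be judged on its own merits. The skeleton you set up is sound and standard: the Riesz--Kolmogorov reduction to total boundedness of the mollified family $\{A_r f_k\}_k$ at a fixed scale, the bound $\|A_r f_k\|_{L^\infty}\le h(r)^{-1/p}\sup_k\|f_k\|_{L^p}$ via Jensen and \eqref{Hyp.mu}, and the observation that \eqref{Hyp.mu} together with $\mu(\Omega)<\infty$ forces $\Omega$ to be totally bounded are all correct.

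The genuine gap is the equicontinuity step, and your first proposed repair does not close it. Continuity of $\omega\mapsto A_r f_k(\omega)$ requires the sphere $S_r(\omega):=\{x\in\Omega : d_\Omega(x,\omega)=r\}$ to be $\mu$-null, and you propose to choose $r$ so that this holds, invoking the fact that each monotone map $r\mapsto\mu(\mathcal{B}_r(\omega))$ has countably many jumps. But that countability is for each \emph{fixed} center $\omega$, whereas you need one radius $r$ that is good for \emph{all} centers simultaneously, and such a radius need not exist. Concretely, take $\Omega=[0,1]$, $\mu=\mathcal{L}+\delta_{1/2}$ (which satisfies \eqref{Hyp.mu}), and the constant sequence $f_k\equiv\mathbf{1}_{\{1/2\}}$, a nonzero element of $L^p(\mu,\Omega)$; one checks directly that this sequence satisfies all hypotheses of the theorem, yet for \emph{every} $r\in(0,1/2)$ the center $\omega=1/2-r$ has $\mu(S_r(\omega))\ge 1$, and $A_r f_k$ jumps by at least $1/\mu(\Omega)$ across $\omega=1/2-r$. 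Hence $A_r f_k\notin C(\Omega)$ for every admissible $r$, and the Arzel\`a--Ascoli argument in $C(\Omega)$ is not applicable at all; no choice of radius restores it. Your second suggested repair (an integrated $L^p$ version of equicontinuity run through a finite cover) is indeed the viable route: by Fubini one can pick $r$ with $\mu(S_r(\omega))=0$ for $\mu$-almost every $\omega$, bound $|A_r f_k(\omega)-A_r f_k(\omega')|$ by a $k$-independent function of the measure of the annulus $\{r-d\le d_\Omega(\cdot,\omega)<r+d\}$ with $d=d_\Omega(\omega,\omega')$, and use dominated convergence to show that piecewise-constant projections onto a fine finite partition approximate $A_r f_k$ uniformly in $k$, reducing the problem to a bounded set in a finite-dimensional space. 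But in your proposal this is a one-sentence gesture; none of its actual content (the almost-everywhere choice of $r$, the annulus estimate, the handling of partition cells of zero measure, the limit interchange) is carried out. As written, the proof is incomplete precisely at its central step.
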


The following result is a consequence of the Arzelà - Ascoli and Alaoglu Theorems.

\begin{theorem}\label{convergences}\cite[p. 13]{AubCelBook}
    Let us consider a sequence of absolutely continuous functions $x_k(\cdot)$ from an interval $I$ of $\mathbb{R}$ to a Banach space $X$ satisfying 
 $$\left\{\begin{array}{l}
 
 \text { i) } \text{for all } t \in I, \, \left\{x_k(t)\right\}_k \text { is a relatively compact subset of } X,
 \\ \\ 
 \text { ii) there exists a positive function } \, c(\cdot) \in L^1(I) \text { such that}
 \\
 \hspace{0.6cm}\text{for almost all } t \in I, \, \, \left\|x_k^{\prime}(t)\right\| \leq c(t).
\end{array}\right.$$
Then there exists a subsequence (again denoted by $x_k(\cdot)$) converging to an absolutely continuous function $x(\cdot)$ from $I$ to $X$ in the sense that
\\

 $\left\{\begin{array}{l}
 \text { i) } \,\, x_k(\cdot) \, \,\text{converges uniformly to} \, \,x(\cdot) \, \text{over compact subsets of} \, \,I,
 \\ \\
 \text { ii)} \,\, x_k^{\prime}(\cdot) \ \text{converges weakly to} \, \, x^{\prime}(\cdot) \, \, \text{in} \, \,  L^{1}(I,X).
\end{array}\right.$
\end{theorem}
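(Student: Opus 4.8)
The plan is to prove the two conclusions in sequence: first extract a subsequence along which $x_k$ converges uniformly on compacta by the Arzelà–Ascoli theorem, then refine it so that the derivatives converge weakly in $L^1(I,X)$ via a vector-valued Dunford–Pettis argument, and finally reconcile the two limits by identifying the weak limit of $\{x_k'\}$ with $x'$.

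First I would establish equicontinuity of the family $\{x_k\}$. Since each $x_k$ is absolutely continuous with $\|x_k'(t)\|\le c(t)$ a.e.\ and $c\in L^1(I)$, for $s\le t$ in $I$ one has
$$\|x_k(t)-x_k(s)\|=\Big\|\int_s^t x_k'(\sigma)\,d\sigma\Big\|\le \int_s^t c(\sigma)\,d\sigma,$$
and the right-hand side is the increment of the absolutely continuous function $t\mapsto\int c$, hence tends to $0$ as $|t-s|\to 0$ uniformly in $k$. Combined with hypothesis (i), which supplies pointwise relative compactness of $\{x_k(t)\}_k$ in $X$, the Arzelà–Ascoli theorem for Banach-space-valued functions yields, on each compact subinterval, a uniformly convergent subsequence; a diagonal extraction over an exhaustion of $I$ by compact intervals produces a single subsequence (relabelled $x_k$) converging uniformly on compacta to a continuous limit $x$.

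Next I would treat the derivatives. The domination $\|x_k'(t)\|\le c(t)$ with $c\in L^1(I)$ shows that $\{x_k'\}_k$ is bounded and uniformly integrable in $L^1(I,X)$. To apply the Dunford–Pettis criterion in the vector-valued setting one must additionally check that, for every measurable $A\subset I$, the set $\{\int_A x_k'(\sigma)\,d\sigma\}_k$ is relatively compact in $X$. For $A$ an interval $[s,t]$ this set equals $\{x_k(t)-x_k(s)\}_k$, relatively compact by hypothesis (i), and the same holds for finite unions of intervals. For a general measurable $A$ one approximates by such finite unions $A_n$ with $\int_{A\triangle A_n}c\to 0$, so that $\|\int_A x_k'-\int_{A_n}x_k'\|\le\int_{A\triangle A_n}c\to 0$ uniformly in $k$; thus $\{\int_A x_k'\}_k$ is within arbitrarily small uniform distance of a totally bounded set, hence is itself totally bounded and, $X$ being complete, relatively compact. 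With uniform integrability and this indefinite-integral compactness in hand, the vector Dunford–Pettis theorem gives relative weak compactness of $\{x_k'\}_k$ in $L^1(I,X)$, and the Eberlein–Šmulian theorem lets me pass to a further subsequence with $x_k'\rightharpoonup g$ weakly in $L^1(I,X)$.

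Finally I would identify $g$ with $x'$. Fixing $t_0\in I$ and testing the identity $x_k(t)=x_k(t_0)+\int_{t_0}^t x_k'(\sigma)\,d\sigma$ against an arbitrary $x^*\in X^*$, the left side converges by the established uniform convergence, while the right side converges because $\sigma\mapsto\mathbf{1}_{[t_0,t]}(\sigma)\,x^*$ defines a continuous linear functional on $L^1(I,X)$ against which $x_k'\rightharpoonup g$; this yields $x(t)=x(t_0)+\int_{t_0}^t g(\sigma)\,d\sigma$ for all $t$, so that $x$ is absolutely continuous with $x'=g$ a.e., which is precisely the asserted weak convergence of the derivatives. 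The main obstacle I anticipate is the weak $L^1$-compactness step for a \emph{general} Banach space $X$: unlike the reflexive case, boundedness of $\{x_k'\}$ alone is insufficient, and it is exactly hypothesis (i) that furnishes the relative compactness of the indefinite integrals required by the vector-valued Dunford–Pettis criterion.
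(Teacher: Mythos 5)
Your first and third steps are sound: the equicontinuity estimate plus hypothesis (i) gives, via Arzelà--Ascoli and a diagonal extraction, uniform convergence on compacta, and the identification of the weak $L^1$-limit of the derivatives with $x'$ (testing $x_k(t)=x_k(t_0)+\int_{t_0}^t x_k'$ against functionals $\mathbf{1}_{[t_0,t]}\,x^*$) is correct once weak convergence is available. The genuine gap is the step you yourself single out as the crux: the ``vector-valued Dunford--Pettis criterion'' you invoke --- uniform integrability together with relative \emph{norm} compactness of $\bigl\{\int_A x_k'\bigr\}_k$ for every measurable $A$ implies relative weak compactness in $L^1(I,X)$ --- is not a theorem for a general Banach space $X$. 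Dunford's criterion (Diestel--Uhl, \emph{Vector Measures}, Thm.\ IV.2.1) requires, besides weakly compact indefinite integrals, that both $X$ and $X^*$ have the Radon--Nikod\'ym property, and without this the implication fails. Concretely, take $X=L^1(0,1)$, let $\mathcal{F}_k$ be the dyadic $\sigma$-algebra of mesh $2^{-k}$, $I_k(t)$ the dyadic interval containing $t$, and set $x_k(t):=\mathbb{E}\bigl[\mathbf{1}_{[0,t]}\mid\mathcal{F}_k\bigr]\in X$. Each $x_k$ is piecewise affine in $t$, hence absolutely continuous, with $x_k'(t)=2^k\mathbf{1}_{I_k(t)}$ a.e., so $\|x_k'(t)\|_X\equiv 1$ and hypothesis (ii) holds with $c\equiv 1$; moreover $\|x_k(t)-\mathbf{1}_{[0,t]}\|_X\le 2^{-k}$, so each $\{x_k(t)\}_k$ is norm convergent and hypothesis (i) holds; finally $\int_A x_k'\,dt=\mathbb{E}[\mathbf{1}_A\mid\mathcal{F}_k]\to\mathbf{1}_A$ in norm by martingale convergence, so your indefinite-integral condition holds for every measurable $A$. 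Yet $\{x_k'\}_k$ has no weakly convergent subsequence in $L^1([0,1];X)\cong L^1((0,1)^2)$: under this isometry $x_k'$ equals $2^k$ on $\{(t,s):s\in I_k(t)\}$, a set carrying unit mass within distance $2^{-k}$ of the diagonal, so any weak limit $g$ would have to satisfy $\iint_{\{|t-s|<\varepsilon\}}g\,dt\,ds=1$ for every $\varepsilon>0$, which is impossible for $g\in L^1$.

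The same example shows the difficulty is not an artifact of your method: the statement itself, for an arbitrary Banach space $X$, is false --- there the uniform limit $x(t)=\mathbf{1}_{[0,t]}$ is nowhere differentiable and the derivatives have no weak $L^1$ cluster point --- so no proof at this level of generality can exist. (The paper does not prove the result; it only quotes it from the cited reference with the remark that it follows from Arzelà--Ascoli and Alaoglu.) A correct version requires an additional hypothesis: either that $X$ and $X^*$ have the Radon--Nikod\'ym property (e.g.\ $X$ reflexive), so that Dunford's theorem applies, or that $\{x_k'(t)\}_k$ be relatively weakly compact for a.e.\ $t$, so that the Diestel--Ruess--Schachermayer criterion applies. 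Note that in the only situation where the paper uses the theorem, $X=\Vspace$ is a Hilbert space; in that (reflexive) setting your argument is complete and correct --- indeed there uniform integrability and boundedness already yield relative weak compactness of $\{x_k'\}_k$, so even your (correctly executed) verification of the compactness of the indefinite integrals becomes superfluous.
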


\bibliographystyle{plain}
\bibliography{generalsoledad}

\end{document}